\documentclass[12pt]{amsart}

\usepackage{amsmath,amssymb,amsfonts}
\usepackage{hyperref}
\usepackage{graphicx}
\usepackage[dvips]{pstricks}
\usepackage{pst-3dplot}
\psset{arrowsize=4pt}

\newtheorem{theorem}{Theorem}[section]
\newtheorem{lemma}[theorem]{Lemma}
\newtheorem{proposition}[theorem]{Proposition}

\theoremstyle{definition}
\newtheorem{definition}[theorem]{Definition}
\newtheorem{example}[theorem]{Example}

\theoremstyle{remark}
\newtheorem{remark}[theorem]{Remark}
\newtheorem{notation}[theorem]{Notation}

\newcommand {\R} {\ensuremath {\mathbb{R}} }
\newcommand {\N} {\ensuremath {\mathbb{N}} }
\newcommand {\isom} {\ensuremath {\cong} }

\newcommand {\onto} {\twoheadrightarrow}
\newcommand {\isomto} {\ensuremath {\xrightarrow{\isom}} }
\newcommand {\hto} {\ensuremath {\xrightarrow{\simeq}} }
\newcommand {\hfrom} {\ensuremath {\xleftarrow{\simeq}} }
\newcommand {\xto}[1] {\ensuremath {\xrightarrow{#1}} }
\newcommand {\Top} {\ensuremath {\mathbf{Top}} }

\newcommand {\adjn} {\leftrightarrows}

\newcommand {\dI} {\vec{I}}

\newcommand{\fc}{\vec{\pi}_1} 
\newcommand{\Cat} {\ensuremath{\mathbf{Cat}}}
\newcommand{\C}{\mathbf{C}}
\newcommand{\D} {\mathbf{D}}

\newcommand{\dTop}{\mathbf{dTop}}
\newcommand{\dS}{\vec{S}^1}

\DeclareMathOperator{\Id}{Id}
\DeclareMathOperator{\im}{im}

\DeclareMathOperator{\Int}{Int}
\DeclareMathOperator{\Ext}{Extrl}

\hyphenation{po-spaces po-space Rau-ssen Euclid}

\newgray{gray1}{0.6}
\newgray{gray2}{0.5}

\begin{document}

\title[Models and van Kampen theorems for DHT]{Models and van Kampen theorems for directed homotopy theory}

\author{Peter Bubenik}
\email{p.bubenik@csuohio.edu}
\address{Department of Mathematics\\
  Cleveland State University\\
  2121 Euclid Ave. RT 1515\\
  Cleveland OH, 44115-2214\\
  USA}

\date{\today}

\subjclass[2000]{Primary 55P99, 68Q85; Secondary 18A40, 18A30, 55U99}

\begin{abstract}
  We study topological spaces with a
  distinguished set of paths, called directed paths. Since these
  directed paths are generally not reversible, the directed homotopy
  classes of directed paths do not assemble into a groupoid, and there
  is no direct analog of the fundamental group. However, they do
  assemble into a category, called the fundamental category. We
  define models of the fundamental category, such as the fundamental
  bipartite graph, and minimal extremal models which are shown to generalize
  the fundamental group. In addition, we prove van Kampen theorems for
  subcategories, retracts, and models of the fundamental category.
\end{abstract}
 
\maketitle

\section{Introduction} 
\label{sec:introduction}

\subsection{Directed spaces and directed homotopies}

The field of directed algebraic topology studies \emph{directed spaces}. That is, topological spaces together with a (local) order, or more generally, spaces together with a subset of allowed paths, called \emph{directed paths}. In either approach, the directed paths are generally not reversible. Consequently, the directed homotopy classes of directed paths behave much differently from the usual homotopy classes of paths (see Example~\ref{example:dipaths}). 
As many topologists are unfamiliar with directed algebraic topology, we give a leisurely introduction, which includes the main new constructions and results of this paper. 

A motivation for this study comes from the field of concurrent (parallel) computing, in which multiple processes have access to shared resources. A directed space models the state space of such a system, and the directed paths model the execution paths. General relativity provides another possible application. For more details, the reader is referred to the papers~\cite{goubault:someGPiCT,frg:atc}.

A number of categorical settings have been used to develop
directed algebraic topology. These include partially ordered spaces
(pospaces) \cite{fghr:components,bubenik:context}, local pospaces
\cite{frg:atc, bubenikWorytkiewicz:modelCategoryForLPS,
  worytkiewicz:lps}, preordered spaces~\cite{grandis:shape}, local
preordered spaces \cite{krishnan:streams}, \mbox{d--spaces}
\cite{grandis:dht1,raussen:invariants}, flows
\cite{gaucher:modelCategory}, and cubical complexes (also known as
higher--dimensional automata)
\cite{pratt:modelingConcurrencyWithGeometry,gaucherGoubault:hda,fajstrup:cubicalComplex,fajstrup:cubicalLPO}.
Here we work in the general setting of Grandis' \emph{\mbox{d--spaces}}.
\begin{definition}[\cite{grandis:dht1}]
  A \emph{\mbox{d--space}} is a topological space $X$ together with a set $dX$
  of paths $\gamma: [0,1] \to X$, called \emph{directed paths} or
  \emph{dipaths} satisfying the following axioms:
  \begin{enumerate}
  \item for all $x \in X$, the constant path $c_x(t) = x$ is in $dX$,
  \item \label{en:reparam} $dX$ is closed under reparametrization: if
    $\gamma \in dX$ and $f:[0,1] \to [0,1]$ is continuous and
    non-decreasing then $\gamma \circ f \in dX$, and
  \item $dX$ is closed under concatenation: if $\gamma_1, \gamma_2 \in
    dX$ and $\gamma_1(1) = \gamma_2(0)$, then $\gamma \in dX$ where
    $\gamma(t) = \gamma_1(2t)$, for $0\leq t\leq \frac{1}{2}$, and
    $\gamma(t) = \gamma_2(2t-1)$, for $\frac{1}{2} \leq t \leq 1$.
  \end{enumerate}
\end{definition}
Since $f:[0,1] \to [0,1]$ in \eqref{en:reparam} above need not be onto,
subpaths of dipaths are also dipaths.  A morphism of \mbox{d--spaces}
$f:X \to Y$, called a \emph{dimap}, is a continuous map which
preserves dipaths. That is, $f(dX) \subseteq dY$, where $f(\gamma) = f
\circ \gamma$.

\begin{example}
  \begin{itemize}
  \item Any topological space $X$ is a \mbox{d--space} with $dX$ equal to the set of all paths in $X$.
  \item Let $\dI = (I, dI)$ where $I$ is the closed interval $[0,1]$ and $dI$ is the set of all non--decreasing continuous maps $I \to I$.
Dipaths in \mbox{d--space} $X$ coincide with dimaps $\dI \to X$.
  \item Let $\dS$ be the unit circle together with all counterclockwise paths.
  \item Given two \mbox{d--spaces} $X$ and $Y$, then $X \times Y$ is a \mbox{d--space} with $d(X \times Y)$ = $dX \times dY$ where $(f,g)(t) = (f(t),g(t))$.
  \item If $X$ is a \mbox{d--space} and $A \subseteq X$, then $A$ is a \mbox{d--space} with $dA$ equal to the subset of paths in $dX$ whose image is in $A$.
  \end{itemize}
\end{example}

An advantage of using \mbox{d--spaces} over the more commonly used preordered spaces is that we can model loops, such as with $\dS$.

A \emph{\mbox{d--homo}topy} between dimaps $f,g:X \to Y$ is a dimap
$H:X \times \dI \to Y$ such that for all $x \in X$, $H(x,0) = f(x)$,
and $H(x,1) = g(x)$. We write $H:f \hto g$ and $H_0 = f$ and $H_1 =
g$. Notice that this notion is not symmetric. To obtain an equivalence
relation we take the transitive symmetric closure and say that $f$ is
\emph{\mbox{d--homo}topic} to $g$ if they are linked by a chain of
\mbox{d--homo}topies, $f \hto f_1 \hfrom f_2 \hto \ldots \hto g$.

Given a dipath $\gamma$ from $a$ to $b$, let $[\gamma]$ denote the equivalence class of dimaps
$\gamma': \dI \to X$ with $\gamma'(0)=a$, $\gamma'(1)=b$ and $\gamma'$
\mbox{d--homo}topic to $\gamma$ relative to $\{a,b\}$. That is, we insist that
the \mbox{d--homo}topies linking $\gamma'$ and $\gamma$ leave the endpoints
fixed. Call this a directed homotopy class of directed paths from $a$
to $b$, or more simply, a homotopy class of dipaths.

\begin{example} \label{example:dipaths}
  The directed paths up to directed homotopy of a \mbox{d--space} are very
  different from the paths up to homotopy of the underlying
  topological space.

  For example, an undirected path $\gamma: I \to X$ need not be
  homotopic relative to its endpoints to a directed path in a \mbox{d--space}
  $X$. Consider the following example, which is a subspace obtained
  from $\dI \times \dI$ by removing two squares.
\begin{center}
\psset{unit=2cm}
\begin{pspicture}(1,1)
\psset{fillstyle=solid,fillcolor=gray,linecolor=black}
\psframe(0,0)(1,1)
\psset{fillcolor=white,linecolor=black}
\psframe(0.2,0.6)(0.4,0.8)
\psframe(0.6,0.2)(0.8,0.4)
\psset{linecolor=black,fillstyle=none}
\psline{->}(0.3,0)(0.3001,0)
\psline{->}(0,0.55)(0,0.5501)
\psline(0,0)(1,1)
\psline{->}(0,0)(0.5,0.5)
\psbezier(0,0)(0.5,0)(0.7,0)(0.8,0.1)
\psbezier(0.8,0.1)(1,0.3)(1,0.9)(1,1)
\psline{->}(0.8,0.1)(0.8001,0.1001)
\psbezier(0,0)(0,0.5)(0,0.7)(0.1,0.8)
\psbezier(0.1,0.8)(0.3,1)(0.9,1)(1,1)
\psline{->}(0.1,0.8)(0.1001,0.8001)
\pscurve[linecolor=yellow](0,0)(0.05,0.05)(0.15,0.85)(0.45,0.85)(0.55,0.15)(0.85,0.15)(0.95,0.95)(1,1)
\end{pspicture}
\end{center}
Furthermore, directed paths in a space that is contractible in the undirected sense are not necessarily \mbox{d--homo}topic. In the following figure we have two non-homotopic dipaths in a contractible \mbox{d--space} obtained from $\dI \times \dI \times \dI$ by removing two isothetic parallelepipeds which intersect the boundary of $X$.
  \begin{center}
    \includegraphics[width=7cm]{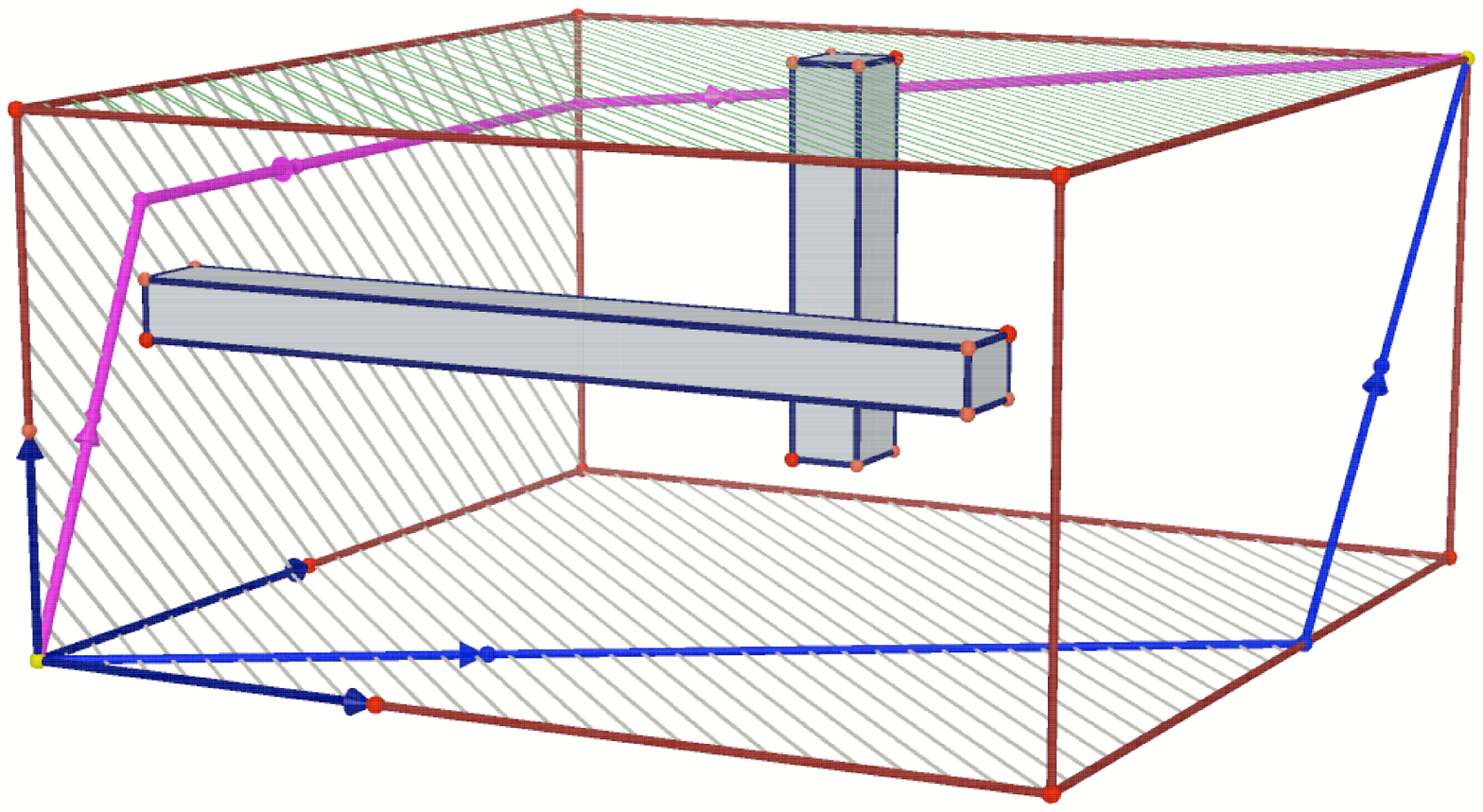}
  \end{center}
\end{example}

\subsection{The fundamental category}

In trying to understand the directed paths in a directed space, $X$, a
basic object of study is the \emph{fundamental category},
$\fc(X)$. Its objects are the points in $X$, and for $a,b \in X$, the
morphisms $\fc(X)(a,b)$ are given by the directed homotopy classes of
directed paths from $a$ to $b$. The undirected version of this
definition results in the fundamental groupoid, in which all morphisms
are invertible. When $X$ is a \mbox{d--space} the only invertible
morphisms in the fundamental category are the homotopy classes for
reversible dipaths.

In usual undirected algebraic topology, the fundamental groupoid is
often simplified to the fundamental group by identifying the
isomorphism classes of objects. That is, the fundamental group is the
\emph{skeleton} of the fundamental groupoid. However, for
\mbox{d--spaces} where the only reversible paths are the constant
paths, the fundamental category is its own skeleton.

This is a central difficulty, and has led to considerable research in
directed algebraic topology. The goal is to reduce the fundamental
category, which typically has uncountably many objects, to some
considerably smaller and preferably finite structure that still
contains `the essential information'.

One approach, explored by Fajstrup, Goubault, Haucourt, and
Raussen \cite{fghr:components,goubaultHaucourt:components2, haucourt:loopFree}, is to use
the calculus of fractions or generalized equivalences to reduce the
fundamental category to its component category. Here we follow
Grandis' approach~\cite{grandis:shape, grandis:quotient} and look for
a (possibly finite) full subcategory of the fundamental category, that will provide an adequate model of the fundamental category.

Now we introduce some new notation that will be useful. Similar notation has been used for fundamental groupoids.
\begin{notation}
  Let $A \subseteq X$ be a subspace. Let $\fc(X,A)$ denote the full subcategory of $\fc(X)$ generated by $A$. That is, $\fc(X,A)$ has as objects the points in $A$, and for $a,b \in A$, $\fc(X,A)(a,b) = \fc(X)(a,b)$.
Let $\iota: \fc(X,A) \to \fc(X)$ denote the inclusion.
For $x \in X$ we simplify $\fc(X,\{x\})$ to $\fc(X,x)$.
\end{notation}

\subsection{Fundamental bipartite graphs}

We introduce a new full subcategory of the fundamental category that is useful for many of the \mbox{d--spaces} that appear in applications.

\begin{definition}
  The objects of a category $\C$ have a preorder defined by $x \leq y$ iff there exists a morphism from $x$ to $y$.
  Call an object $a \in \C$ \emph{minimal} if $x \leq a$ implies
  $x=a$. Similarly define $b \in \C$ to be \emph{maximal} if $b\leq x$
  implies $b=x$. Say that an object is \emph{extremal} if it is either
  maximal or minimal. Let $\Ext(\C)$ denote the set of
  all extremal objects in $\C$.
  For a \mbox{d--space} $X$ we will sometimes let $\Ext(X)$ denote
  $\Ext(\fc(X))$.  Define the \emph{fundamental bipartite graph} of
  $X$ to be $\fc(X,\Ext(X))$.  To view this category as a bipartite
  graph, we ignore the identity maps.
\end{definition}

\begin{example}
Let $X$ be the subspace of $\dI \times \dI$ in the left--hand figure. Its fundamental bipartite graph has two vertices and four edges. We remark that the branching information is lost in this graph.
\begin{center}
\psset{unit=2cm}
\begin{pspicture}(-0.2,-0.1)(1.2,1.1)
\psset{fillstyle=solid,fillcolor=gray,linecolor=black}
\psframe(0,0)(1,1)
\psset{fillcolor=white,linecolor=black}
\psframe(0.2,0.2)(0.4,0.4)
\psframe(0.6,0.6)(0.8,0.8)
\psset{linecolor=black,fillstyle=none}
\psline{->}(0.3,0)(0.3001,0)
\psline{->}(0,0.55)(0,0.5501)
\psdots(0,0)(1,1)
\uput[l](0,0){$a$}
\uput[r](1,1){$b$}
\end{pspicture}
\quad \quad \quad \quad
\psset{unit=2cm}
\begin{pspicture}(-0.2,-0.1)(1.2,1.1)
\psdots(0,0.5)(1,0.5)
\uput[l](0,0.5){$a$}
\uput[r](1,0.5){$b$}
\psset{linestyle=dashed,linecolor=red}
\pscurve(0,0.5)(0.5,0.35)(1,0.5)
\pscurve(0,0.5)(0.5,0.45)(1,0.5)
\pscurve(0,0.5)(0.5,0.55)(1,0.5)
\pscurve(0,0.5)(0.5,0.65)(1,0.5)
\psline{->}(0.53,0.35)(0.531,0.35)
\psline{->}(0.53,0.45)(0.531,0.45)
\psline{->}(0.53,0.55)(0.531,0.55)
\psline{->}(0.53,0.65)(0.531,0.65)
\end{pspicture}
\end{center}
\end{example}

\subsection{Past retracts and future retracts}

In order to simplify the fundamental category, one obvious approach to
the homotopy theorist is to apply directed homotopies to the
underlying space.
\begin{definition}
 Call a directed map $H: X \times \dI \to X$ a
\emph{future homotopy flow} if $H_0 = \Id_X$ and a \emph{past homotopy
  flow} if $H_1 = \Id_X$. For a future (past) homotopy flow let $f$
equal $H_1$ ($H_0$). A future (past) homotopy flow induces a functor
$\fc(X) \to \fc(\im f) \isom \fc(X,\im f)$.
\end{definition}
Raussen~\cite{raussen:invariants} has carefully studied these flows.

A fruitful generalization at the level of the fundamental category is
given by the following definition. In Section~\ref{section:preliminaries} we will see that our definition is equivalent to the categorical definition given by Grandis~\cite{grandis:shape}.
\begin{definition} \label{def:futureRetract}
A \emph{future retract} of $\fc(X)$ is a
subspace $A \subseteq X$ together with a homotopy class of dipaths
$[\gamma_x]$ for all $x\in X$, with $\gamma_x(0) = x$ and $\gamma_x(1)
=: x^+ \in A$ such that for all homotopy classes of dipaths
$[\gamma]:x \to a$ where $a \in A$, there is a unique morphism making
the following diagram commute.
\begin{equation} \label{eq:futureRetract}
\psset{arrows=->,nodesep=3pt,rowsep=1cm,colsep=1cm}
\begin{psmatrix}
  & a\\
x & x^+
\everypsbox{\scriptstyle}
\ncLine{2,1}{2,2}\aput(0.6){[\gamma_x]}
\ncLine{2,1}{1,2}\Aput{[\gamma]}
\ncLine[linestyle=dashed]{2,2}{1,2}
\end{psmatrix}
\end{equation}
We also insist that for $a \in A$, $[\gamma_a] = [\Id_a]$.
\end{definition}

\begin{example}
  In this example we describe a future retract of the square annulus,
  a subspace of $\dI \times \dI$. For all the points $x$ in the lower
  left square, $x^+ = a$ and for the remaining points $y$, $y^+ =
  b$. So $A = \{a,b\}$. We can think of the future retract as pushing
  points forward in time in a way so that no decisions are made with
  respect to the future.
\begin{center}
\psset{unit=2.5cm}
\begin{pspicture}(1,1)
\psset{fillstyle=solid,fillcolor=gray1,linecolor=black}
\psframe(0,0)(1,1)
\psset{fillcolor=white,linecolor=black}
\psframe(0.3333,0.3333)(0.6666,0.6666)
\psset{linecolor=black,fillstyle=none}
\psline{->}(0.3,0)(0.3001,0)
\psline{->}(0,0.55)(0,0.5501)
\psdots(0.3333,0.3333)(1,1)(0.1,0.2)(0.6,0.1)
\uput[ur](0.3333,0.3333){$a$}
\uput[r](1,1){$b$}
\uput[d](0.1,0.2){$x$}
\uput[l](0.6,0.1){$y$}
\psset{linecolor=blue,linestyle=dashed}
\psline{->}(0.1,0.2)(0.2166,0.2666)
\psline(0.2166,0.2666)(0.3333,0.3333)
\psline{->}(0.6,0.1)(0.8,0.55)
\psline(0.8,0.55)(1,1)
\psset{linecolor=yellow,linestyle=dotted}
\psline(0.3333,0)(0.3333,0.3333)(0,0.3333)
\end{pspicture}
\end{center}
\end{example}

We should not be unduly concerned that these retracts are not induced
by continuous maps. For in the classical undirected case, the skeleton
functor from the fundamental groupoid of $S^1$ to $\pi_1(S^1)$ is not
induced by a continuous map.

We remark that the definition implies that there is a unique morphism
making the following diagram commute.
\bigskip
\[
\begin{psmatrix}[arrows=->,nodesep=3pt,rowsep=1cm,colsep=1cm]
x & y\\
x^+  & y^+
\everypsbox{\scriptstyle}
\ncLine{1,1}{1,2}^{[\gamma]}
\ncLine{1,1}{2,1}\bput(0.4){[\gamma_x]}
\ncLine{1,2}{2,2}\aput(0.4){[\gamma_y]}
\ncLine[linestyle=dashed]{2,1}{2,2}^{[\gamma]^+}
\end{psmatrix}
\]
By uniqueness, $[\Id_x]^+ = [\Id_{x^+}]$ and $[\beta \circ \alpha]^+ =
[\beta]^+ \circ [\alpha]^+$. That is, we have a functor $P^+: \fc(X)
\to \fc(X,A)$.
Also note that $P^+ \fc(\iota) = \Id_{\fc(X,A)}$.

Dually, one has \emph{past retracts}, which induce a functor $P^-: \fc(X) \to \fc(X,A)$. For an explicit definition, see Definition~\ref{def:pastRetract}.

\subsection{Extremal models}

Just as we took the transitive symmetric closure of
\mbox{d--homo}topies, we are led to consider chains of past and future
retracts. In Definition~\ref{def:futurePastRetract}, we will generalize our previous definitions of future retracts and past retracts to full
subcategories of the fundamental category. This allows us to define the following new model of a \mbox{d--space} $X$.

\begin{definition} \label{def:extremalModel}
An
\emph{extremal model} of $X$ is a chain of
future retracts and  past retracts
\begin{equation} \label{eq:extremalModel}
  \fc(X) \xto{P^+_1} \fc(X,X_1) \xto{P^-_2}  \fc(X,X_2) \xto{P^+_3} \ldots \xto{P^{\pm}_n} \fc(X,A),
\end{equation}
such that $\Ext(X) \subseteq A$.
Call an extremal model \emph{minimal} if there are no nontrivial
future or past retracts $\fc(X,A) \to \fc(X,A')$ such that $\Ext(X) \subseteq A'$.
\end{definition}

\begin{example}
  Let $X$ be a nonempty path--connected topological space. Let $dX$ be
  the set of all paths in $X$ and choose $x \in X$.  Then $\fc(X)$ is
  the fundamental groupoid, and $\fc(X,x)$ is the fundamental
  group.  If $X = \{x\}$ then $\Ext(X) = \{x\}$, but otherwise
  $\Ext(X)$ is empty.  Set $[\gamma_x] = [\Id_x]$ and for all other $y
  \in X$ choose a homotopy class $[\gamma_y]$ of paths from $y$ to
  $x$. This induces a functor $\fc(X) \to \fc(X,x)$ which is the
  skeleton functor, a future retract, a past retract, and a minimal
  extremal model.
\end{example}

\begin{example}
Here we give three examples of an extremal model obtained from a future retract followed by a past retract. In each case, we have included the generating non-identity morphisms in the final figure.
\begin{enumerate}
\item
The square annulus:
\begin{center}
\psset{unit=2.5cm}
\begin{pspicture}(1,1)
\psset{fillstyle=solid,fillcolor=gray1,linecolor=black}
\psframe(0,0)(1,1)
\psset{fillcolor=white,linecolor=black}
\psframe(0.3333,0.3333)(0.6666,0.6666)
\psset{linecolor=black,fillstyle=none}
\psline{->}(0.3,0)(0.3001,0)
\psline{->}(0,0.55)(0,0.5501)
\psdots(0,0)(1,1)
\uput[l](0,0){$a$}
\uput[r](1,1){$b$}
\psset{linecolor=yellow,linestyle=dotted}
\psline(0.3333,0)(0.3333,0.3333)(0,0.3333)
\psset{linecolor=blue,linestyle=solid,doubleline=true,doublecolor=gray1,arrowscale =1.5}
\psline{->}(0.7777,0.7777)(0.8888,0.8888)
\end{pspicture}%
\quad \quad \quad \quad
\begin{pspicture}(1,1)
\psset{fillstyle=solid,fillcolor=gray1,linecolor=black}
\psframe(0,0)(0.3333,0.3333)
\psdots(0,0)(1,1)
\uput[l](0,0){$a$}
\uput[r](1,1){$b$}
\psset{linecolor=blue,linestyle=solid,doubleline=true,doublecolor=gray1,arrowscale =1.5}
\psline{->}(0.2222,0.2222)(0.1111,0.1111)
\end{pspicture}%
\quad \quad \quad \quad
\begin{pspicture}(1,1)
\psdots(0,0)(1,1)
\uput[l](0,0){$a$}
\uput[r](1,1){$b$}
\psset{linestyle=dashed,linecolor=red}
\pscurve(0,0)(0.6,0.4)(1,1)
\pscurve(0,0)(0.4,0.6)(1,1)
\psline{->}(0.4,0.6)(0.401,0.601)
\psline{->}(0.6,0.4)(0.601,0.401)
\end{pspicture}%
\end{center}
\item
The Swiss flag:
\begin{center}
\psset{unit=2.5cm}
\begin{pspicture}(1,1)
\psframe[linecolor=black,fillstyle=solid,fillcolor=gray1](0,0)(1,1)
\psline{->}(0.53,0)(0.5301,0)
\psline{->}(0,0.53)(0,0.5301)
\pspolygon[linecolor=black,fillstyle=solid,fillcolor=white](0.4,0.4)(0.2,0.4)(0.2,0.6)(0.4,0.6)(0.4,0.8)(0.6,0.8)(0.6,0.6)(0.8,0.6)(0.8,0.4)(0.6,0.4)(0.6,0.2)(0.4,0.2)
\psdots(0,0)(0.4,0.4)(0.6,0.6)(1,1)
\uput[l](0,0){$a$}
\uput[r](0.4,0.4){$b$}
\uput[l](0.6,0.6){$c$}
\uput[r](1,1){$d$}
\psset{linecolor=yellow,linestyle=dotted}
\psline(0.4,0)(0.4,0.2)(0.2,0.2)(0.2,0.4)(0,0.4)
\psline(0.8,0.6)(0.8,0.8)(0.6,0.8)
\psset{linecolor=blue,linestyle=solid,doubleline=true,doublecolor=gray1,arrowscale =1.5}
\psline{->}(0.25,0.25)(0.35,0.35)
\psline{->}(0.85,0.85)(0.95,0.95)
\end{pspicture}%
\quad \quad \quad \quad
\begin{pspicture}(1,1)
\pspolygon[linecolor=black,fillstyle=solid,fillcolor=gray1](0,0)(0.4,0)(0.4,0.2)(0.2,0.2)(0.2,0.4)(0,0.4)
\pspolygon[linecolor=black,fillstyle=solid,fillcolor=gray1](0.6,0.6)(0.6,0.8)(0.8,0.8)(0.8,0.6)
\psdots(0,0)(0.4,0.4)(0.6,0.6)(1,1)
\uput[l](0,0){$a$}
\uput[r](0.4,0.4){$b$}
\uput[l](0.6,0.6){$c$}
\uput[r](1,1){$d$}
\psset{linecolor=blue,linestyle=solid,doubleline=true,doublecolor=gray1,arrowscale =1.5}
\psline{->}(0.15,0.15)(0.05,0.05)
\psline{->}(0.75,0.75)(0.65,0.65)
\end{pspicture}%
\quad \quad \quad \quad
\begin{pspicture}(1,1)
\psdots(0,0)(0.4,0.4)(0.6,0.6)(1,1)
\uput[l](0,0){$a$}
\uput[r](0.4,0.4){$b$}
\uput[l](0.6,0.6){$c$}
\uput[r](1,1){$d$}
\psset{linestyle=dashed,linecolor=red}
\psline(0,0)(0.4,0.4)
\psline(0.6,0.6)(1,1)
\pscurve(0,0)(0.7,0.3)(1,1)
\pscurve(0,0)(0.3,0.7)(1,1)
\psline{->}(0.3,0.7)(0.301,0.701)
\psline{->}(0.7,0.3)(0.701,0.301)
\psline{->}(0.2,0.2)(0.201,0.201)
\psline{->}(0.8,0.8)(0.801,0.801)
\end{pspicture}%
\end{center}
\item
The directed square with two holes in series:
\begin{center}
\psset{unit=2.5cm}
\begin{pspicture}(1,1)
\psset{fillstyle=solid,fillcolor=gray1,linecolor=black}
\psframe(0,0)(1,1)
\psset{fillcolor=white,linecolor=black}
\psframe(0.2,0.2)(0.4,0.4)
\psframe(0.6,0.6)(0.8,0.8)
\psset{linecolor=black,fillstyle=none}
\psline{->}(0.3,0)(0.3001,0)
\psline{->}(0,0.55)(0,0.5501)
\psdots(0,0)(0.6,0.6)(1,1)
\uput[l](0,0){$a$}
\uput[ur](0.6,0.6){$b$}
\uput[r](1,1){$c$}
\psset{linecolor=yellow,linestyle=dotted}
\psline(0.6,0)(0.6,0.4)(0.4,0.4)(0.4,0.6)(0,0.6)
\psline(0.6,0.4)(0.6,0.6)(0.4,0.6)
\psset{linecolor=blue,linestyle=solid,doubleline=true,doublecolor=gray1,arrowscale =1.5}
\psline{->}(0.45,0.45)(0.55,0.55)
\psline{->}(0.85,0.85)(0.95,0.95)
\end{pspicture}%
\quad \quad \quad \quad
\begin{pspicture}(1,1)
\psset{fillstyle=solid,fillcolor=gray1,linecolor=black}
\pspolygon(0,0)(0.6,0)(0.6,0.4)(0.4,0.4)(0.4,0.6)(0,0.6)
\psset{fillcolor=white,linecolor=black}
\psframe(0.2,0.2)(0.4,0.4)
\psdots(0,0)(0.6,0.6)(1,1)
\uput[l](0,0){$a$}
\uput[ur](0.6,0.6){$b$}
\uput[r](1,1){$c$}
\psset{linecolor=blue,linestyle=solid,doubleline=true,doublecolor=gray1,arrowscale =1.5}
\psline{->}(0.15,0.15)(0.05,0.05)
\end{pspicture}%
\quad \quad \quad \quad
\begin{pspicture}(1,1)
\psdots(0,0)(0.6,0.6)(1,1)
\uput[l](0,0){$a$}
\uput[ur](0.6,0.6){$b$}
\uput[r](1,1){$c$}
\psset{linestyle=dashed,linecolor=red}
\pscurve(0,0)(0.2,0.4)(0.6,0.6)
\pscurve(0,0)(0.4,0.2)(0.6,0.6)
\pscurve(0.6,0.6)(0.7,0.9)(1,1)
\pscurve(0.6,0.6)(0.9,0.7)(1,1)
\psline{->}(0.2,0.4)(0.201,0.401)
\psline{->}(0.4,0.2)(0.401,0.201)
\psline{->}(0.7,0.9)(0.701,0.901)
\psline{->}(0.9,0.7)(0.901,0.701)
\end{pspicture}%
\end{center}
\end{enumerate}
In all three cases we obtain a minimal extremal model. The first two
are in fact equal to the fundamental bipartite graph. Notice that in the third example, we also have the branching information which is lost in the fundamental bipartite graph.
\end{example}

\begin{example}
  Let $x \in \dS$. The category $\fc(\dS, x)$ is isomorphic to the commutative monoid of non-negative integers under addition. For $y \in \dS$, let $[\gamma_y]$ be the homotopy class of dipaths from $y$ to $x$ such that no proper subpath of $\gamma_y$ is a dipath from $y$ to $x$. This defines a future retract of $\fc(\dS)$. The induced functor 
\begin{equation*}
P^+: \fc(\dS) \to \fc(\dS,x) \isom (\N,+)
\end{equation*}
is  a minimal extremal model.
\end{example}

The simple proof of the following is in Section~\ref{sec:fbg}.

\begin{proposition}  \label{prop:extremal-model}
  An extremal model induces an injection of fundamental bipartite graphs.
\end{proposition}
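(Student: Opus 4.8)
The plan is to show that the functor underlying the extremal model restricts, on extremal objects, to a subgraph inclusion. Write $P = P^{\pm}_n \circ \cdots \circ P^+_1 \colon \fc(X) \to \fc(X,A)$ for the composite coming from \eqref{eq:extremalModel}. The fundamental bipartite graph of $X$ is $\fc(X,\Ext(X))$, while, regarding the model $\fc(X,A)$ as a category in its own right, its fundamental bipartite graph is the full subcategory on $\Ext(\fc(X,A))$; being a full subcategory of a full subcategory, this is $\fc(X,\Ext(\fc(X,A)))$. I want to produce the functor induced by $P$ between these two graphs and verify it is an injection.

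The key step is the containment $\Ext(X) \subseteq \Ext(\fc(X,A))$. By the definition of an extremal model we already have $\Ext(X) \subseteq A$, so it suffices to observe that extremality is inherited by a full subcategory that retains the object. Since $\fc(X,A)$ is full, the preorder it induces on $A$ is exactly the restriction of the preorder on $\fc(X)$. Hence if $a \in \Ext(X) \subseteq A$ is minimal in $\fc(X)$, there is no $x \in X$ with $x < a$, so a fortiori no $x \in A$ with $x < a$, and $a$ is minimal in $\fc(X,A)$; the maximal case is dual. Thus $\Min(\fc(X)) \subseteq \Min(\fc(X,A))$ and $\Max(\fc(X)) \subseteq \Max(\fc(X,A))$, giving $\Ext(X) \subseteq \Ext(\fc(X,A))$.

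Next I would check that $P$ acts as the identity on $A$. Each retract in the chain is a retraction onto its target full subcategory --- this is the analogue for the generalized retracts of Definition~\ref{def:futurePastRetract} of the identity $P^+ \fc(\iota) = \Id_{\fc(X,A)}$ recorded after Definition~\ref{def:futureRetract} --- so each $P^{\pm}_i$ fixes every object and every morphism lying in its target $\fc(X,X_i)$. Because the subspaces are nested, $A \subseteq X_{n-1} \subseteq \cdots \subseteq X_1 \subseteq X$, every object and morphism of $\fc(X,A)$ is fixed by each $P^{\pm}_i$ in turn, and hence by $P$. In particular $P$ restricts to the identity on $\fc(X,\Ext(X))$, both on objects and on the morphism sets $\fc(X)(a,b) = \fc(X,A)(a,b)$ for $a,b \in \Ext(X)$.

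Combining these two observations, $P$ carries $\Ext(X)$ identically into $\Ext(\fc(X,A))$ and is the identity on the corresponding hom-sets, so the induced map of fundamental bipartite graphs is precisely the inclusion of full subcategories $\fc(X,\Ext(X)) \incl \fc(X,\Ext(\fc(X,A)))$. This is injective on objects, since $\Ext(X) \subseteq \Ext(\fc(X,A))$, and fully faithful, hence injective on edges, which is the desired injection. I expect the only point requiring genuine care to be the bookkeeping of the nested retractions that guarantees $P$ fixes $A$ pointwise; the conceptual content is the one-line remark that extremality survives passage to a full subcategory containing the object, together with the defining inclusion $\Ext(X) \subseteq A$ of an extremal model.
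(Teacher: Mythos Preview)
Your proof is correct and follows essentially the same approach as the paper: both arguments hinge on the observation that $\Ext(X) \subseteq A$ by definition and that extremality in $\fc(X)$ passes to the full subcategory $\fc(X,A)$, yielding $\Ext(\fc(X)) \subseteq \Ext(\fc(X,A))$ and hence an inclusion of full subcategories. The paper's proof is terser---it simply records this containment and concludes---whereas you additionally verify that the composite $P$ restricts to the identity on $A$; this extra bookkeeping is correct but not strictly needed, since the injection is already identified as the subcategory inclusion without reference to $P$.
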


We will see that if a \mbox{d--space} $X$ is a compact pospace, then
this map is in fact an isomorphism (Theorem~\ref{prop:compactPospace}).

\subsection{Directed van Kampen theorems}

One of the main  tools for calculating the fundamental group and the
fundamental groupoid is the Seifert--van Kampen Theorem. A version of
this theorem also applies to the fundamental category. It was proved
by Goubault for local pospaces~\cite{goubault:someGPiCT} and by
Grandis for \mbox{d--spaces}~\cite{grandis:dht1}.  These proofs follow R. Brown's proof of the usual van Kampen theorem for
groupoids~\cite{brown:groupoidsAndVanKampen,brown:book}.

Let $X_1, X_2 \subseteq X$ be \mbox{d--spaces} with $X$ equal to the union of
the interiors of $X_1$ and $X_2$. Let $X_0 = X_1 \cap X_2$. Then
\[
\psset{arrows=->,nodesep=3pt,rowsep=1cm,colsep=1cm}
\begin{psmatrix}
X_0 & X_2\\
X_1  & X
\everypsbox{\scriptstyle}
\ncLine{1,1}{2,1}\Bput{i_1}
\ncLine{1,1}{1,2}^{i_2}
\ncLine{2,1}{2,2}^{j_1}
\ncLine{1,2}{2,2}\Aput{j_2}
\end{psmatrix}
\]
is a pushout in the category of \mbox{d--spaces}.
\begin{theorem}[\cite{grandis:dht1}]
 The induced commutative diagram 

\[
\psset{arrows=->,nodesep=3pt,rowsep=1cm,colsep=1cm}
\begin{psmatrix}
\fc(X_0) & \fc(X_2)\\
\fc(X_1) & \fc(X)
\everypsbox{\scriptstyle}
\ncLine{1,1}{2,1}\Bput{\fc(i_1)}
\ncLine{1,1}{1,2}\Aput{\fc(i_2)}
\ncLine{2,1}{2,2}\Aput{\fc(j_1)}
\ncLine{1,2}{2,2}\Aput{\fc(j_2)}
\end{psmatrix}
\]
is a pushout in the category of small categories.
\end{theorem}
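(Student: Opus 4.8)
The plan is to verify the universal property of the pushout directly, following R.~Brown's strategy for the groupoid van Kampen theorem and adapting each step to the directed setting. Fix a small category $\D$ together with functors $F_1 : \fc(X_1) \to \D$ and $F_2 : \fc(X_2) \to \D$ satisfying $F_1 \fc(i_1) = F_2 \fc(i_2)$; I must produce a unique functor $F : \fc(X) \to \D$ with $F \fc(j_1) = F_1$ and $F \fc(j_2) = F_2$. On objects there is no choice: each point $x \in X$ lies in $X_1$ or $X_2$, and the two candidate values agree whenever $x \in X_0$ because $F_1$ and $F_2$ agree on $\fc(X_0)$, so setting $F(x) := F_k(x)$ for any $k$ with $x \in X_k$ is forced and well defined.

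The construction of $F$ on morphisms rests on a Lebesgue-number argument. Given a dipath $\gamma : \dI \to X$ representing a morphism of $\fc(X)$, the sets $\gamma^{-1}(\Int X_1)$ and $\gamma^{-1}(\Int X_2)$ form an open cover of the compact interval $[0,1]$ (here I use the hypothesis that $X$ is the union of the interiors of $X_1$ and $X_2$). Choosing a subdivision $0 = t_0 < t_1 < \cdots < t_n = 1$ finer than a Lebesgue number, each restriction $\gamma_i := \gamma|_{[t_{i-1}, t_i]}$ is a dipath whose image lies in some $X_{k(i)}$, and I set
\[
F([\gamma]) := F_{k(n)}([\gamma_n]) \circ \cdots \circ F_{k(1)}([\gamma_1]).
\]
To see this is independent of the choices, I pass to a common refinement to compare two subdivisions, using functoriality of each $F_k$ to split a single factor into a composite, and I use the agreement of $F_1$ and $F_2$ on $\fc(X_0)$ to see that the value of a factor whose image lies in $X_0$ does not depend on whether it is regarded as living in $X_1$ or in $X_2$. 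The same agreement shows $F$ is compatible with $\fc(j_1)$ and $\fc(j_2)$, and functoriality of $F$ (identities and composition) is then immediate from the definition via subdivision.

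The main obstacle is to show that $F([\gamma])$ depends only on the directed-homotopy class of $\gamma$ relative to $\{a,b\}$, equivalently that a single directed homotopy $H : \dI \times \dI \to X$ relative to $\{a,b\}$ between dipaths $\gamma$ and $\gamma'$ leaves the value unchanged. Applying the Lebesgue number to the open cover $\{H^{-1}(\Int X_1), H^{-1}(\Int X_2)\}$ of $[0,1]^2$, I obtain a grid of subsquares each of whose images lies in a single $X_k$. I then connect the lower-right boundary path (which the dimap $H$ sends to a representative of $[\gamma]$, since the right edge is constant at $b$) to the upper-left boundary path (sent to a representative of $[\gamma']$, since the left edge is constant at $a$) through a sequence of \emph{monotone} staircase paths from $(0,0)$ to $(1,1)$, changing one subsquare at a time by replacing its lower-right corner (go right, then up) with its upper-left corner (go up, then right). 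The key point, and the place where directedness must be handled with care, is that each such elementary move is realized inside a single subsquare $R$: the two corner paths of $R$ are directed paths in $\dI \times \dI$ with common endpoints, and they are directed-homotopic relative to their endpoints via the evident filling of $R$; since $H|_R$ is a dimap into $X_k$, it carries this to a directed homotopy in $X_k$, so $F_k$ assigns the two the same morphism. Because every intermediate staircase is a genuine dipath, I may express $F$ of each as a composite along its edges and conclude, move by move, that the value is unchanged; after the final move I obtain $F([\gamma]) = F([\gamma'])$.

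Uniqueness is built into the construction: any functor $G$ with $G \fc(j_k) = F_k$ must send each $[\gamma_i]$ to $F_{k(i)}([\gamma_i])$, hence must agree with $F$ on every morphism by the subdivision formula. The only genuinely new ingredient relative to the undirected case is the verification that the elementary staircase moves stay within the class of dipaths and that the square fillings are directed homotopies; once this is in place, the combinatorial bookkeeping of the grid argument proceeds exactly as in Brown's proof.
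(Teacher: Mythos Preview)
Your proof is correct and follows essentially the same approach as the paper. The paper does not prove this statement independently---it is cited from Grandis---but the paper's proof of its generalization (Theorem~\ref{thm:pushoutAinX}) uses exactly the Brown-style Lebesgue/grid argument you describe; your treatment of the homotopy-invariance step via monotone staircases is in fact more explicit than the paper's sketch. One small remark: the ``evident filling of $R$'' does not give a single d-homotopy between the two corner paths (neither dominates the other pointwise), but it does yield a zigzag through, e.g., the diagonal of $R$, which is the standard square lemma for $\fc$ and is what you need.
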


We prove one version of this theorem for full (co)reflective subcategories, and another for future retracts and past retracts.
Let $X_1, X_2 \subseteq X$ be \mbox{d--spaces} with $X$ equal to the union of the interiors of $X_1$ and $X_2$, and $X_0 = X_1 \cap X_2$.  Let $A_1, A_2 \subseteq A$ be
\mbox{d--spaces} with $A = \Int(A_1) \cup \Int(A_2)$ and $A_0 = A_1 \cap
A_2$.
Assume that for $k=1,2,3$, $A_k \subseteq X_k$. So, we have the following commutative diagram of \mbox{d--spaces}.
\begin{equation} \label{cd:subspaces}
\psset{arrows=->,nodesep=3pt,rowsep=0.25cm,colsep=1cm}
\begin{psmatrix}
& A_0 & & A_2\\
A_1 & & A\\
& X_0 & & X_2\\
X_1 & & X
\everypsbox{\scriptstyle}
\ncLine{1,2}{1,4}
\ncLine{1,2}{2,1}
\ncLine{1,2}{3,2}
\ncLine{1,4}{2,3}
\ncLine{1,4}{3,4}
\ncLine{2,1}{2,3}
\ncLine{2,1}{4,1}
\ncLine{2,3}{4,3}
\ncLine{3,2}{3,4}
\ncLine{3,2}{4,1}
\ncLine{3,4}{4,3}
\ncLine{4,1}{4,3}
\end{psmatrix}
\end{equation}

\begin{theorem} \label{thm:vanKampen}
  Given compatible future retracts (solid arrows)
  \[
  \psset{arrows=->,nodesep=3pt,rowsep=0.5cm,colsep=1cm}
  \begin{psmatrix}
    & \fc(X_0,A_0) & & \fc(X_2,A_2)\\
    \fc(X_1,A_1) & & \fc(X,A)\\
    & \fc(X_0) & & \fc(X_2)\\
    \fc(X_1) & & \fc(X) \psset{shortput=nab} \everypsbox{\scriptstyle}
    \ncline{1,2}{1,4} \ncline{1,2}{2,1} \ncline{3,2}{1,2}
    \ncline{3,4}{1,4} \ncline{4,1}{2,1} \ncline{3,2}{3,4}
    \ncline{3,2}{4,1} \ncline[linestyle=dashed]{2,1}{2,3}
    \ncline[linestyle=dashed]{1,4}{2,3}
    \ncline[linestyle=dashed]{4,1}{4,3}
    \ncline[linestyle=dashed]{3,4}{4,3}
    \ncline[linestyle=dotted]{4,3}{2,3}
  \end{psmatrix}
  \]
  the top square, induced by \eqref{cd:subspaces}, is a pushout of categories, and there is an induced retraction (dotted arrow) on the pushouts, which makes the diagram commute.
\end{theorem}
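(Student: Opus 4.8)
The plan is to reformulate future retracts categorically and then reduce both conclusions to a single generation statement. First I would record that, by the equivalence with Grandis' definition promised in Section~\ref{section:preliminaries}, a future retract $P^+_k\colon \fc(X_k)\to\fc(X_k,A_k)$ exhibits $\fc(X_k,A_k)$ as a \emph{reflective} subcategory: writing $\iota_k$ for the full inclusion, $P^+_k$ is left adjoint to $\iota_k$, the unit at $x$ is the chosen class $[\gamma_x]\colon x\to x^+$, and $P^+_k\iota_k=\Id$. I read ``compatible'' as the statement that all the chosen classes agree on overlaps. Concretely, writing $u_k,s_k$ for the functors induced by the inclusions $A_0\subseteq A_k$, $X_0\subseteq X_k$, and $t_k,v_k$ for those induced by $A_k\subseteq A$, $X_k\subseteq X$, compatibility gives $P^+_k u_k=s_k P^+_0$, the inclusion identities $u_k\iota_0=\iota_k s_k$ and $\iota\, t_k=v_k\iota_k$, and the fact that for a crossing point $p\in X_0$ the reflection $p^+\in A_0$ together with its unit is carried identically into $\fc(X_1)$, $\fc(X_2)$ and $\fc(X)$. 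The bottom square is a pushout of categories by the directed van Kampen theorem of Grandis recalled above (since $X=\Int(X_1)\cup\Int(X_2)$), so $\fc(X)=\fc(X_1)\sqcup_{\fc(X_0)}\fc(X_2)$.

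The technical heart, which I would isolate as a lemma, is that $\fc(X,A)$ is \emph{generated as a category by the images of $t_1$ and $t_2$}. On objects this is immediate, as every object of $\fc(X,A)$ is a point of $A\subseteq A_1\cup A_2$. For a morphism $[\gamma]\colon a\to b$ with $a,b\in A$ and $\gamma$ a dipath in $X$, I would run the usual van Kampen subdivision: by compactness choose a partition realizing $[\gamma]=[\gamma_n]\circ\cdots\circ[\gamma_1]$ in $\fc(X)$, where each $\gamma_i$ is a dipath in $X_{\varepsilon_i}$ (with $\varepsilon_i\in\{1,2\}$) from $p_{i-1}$ to $p_i$, the interior division points $p_1,\dots,p_{n-1}$ lying in $X_0$ and $p_0=a$, $p_n=b$. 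Each interior $p_i$ has a reflection $p_i^+\in A_0$ with unit $\eta_i\colon p_i\to p_i^+$; by compatibility these are unambiguous in $\fc(X)$, and $\eta_0=\Id_a$, $\eta_n=\Id_b$ because $a,b\in A$. Naturality of each reflection unit gives $\eta_i\circ[\gamma_i]=\iota_{\varepsilon_i}P^+_{\varepsilon_i}[\gamma_i]\circ\eta_{i-1}$ in $\fc(X_{\varepsilon_i})$, hence in $\fc(X)$; telescoping these identities collapses the units and yields, in $\fc(X)$,
\[
[\gamma]=\bigl(\iota\, t_{\varepsilon_n}P^+_{\varepsilon_n}[\gamma_n]\bigr)\circ\cdots\circ\bigl(\iota\, t_{\varepsilon_1}P^+_{\varepsilon_1}[\gamma_1]\bigr).
\]
Since $\iota$ is fully faithful, this is an equality in $\fc(X,A)$ exhibiting $[\gamma]$ as a composite of morphisms in the images of $t_1,t_2$, proving the lemma.

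Granting the generation lemma, both conclusions follow formally. For the top square I would verify the universal property directly: given $F_1,F_2$ into a category $D$ with $F_1 s_1=F_2 s_2$, the functors $F_1P^+_1$ and $F_2P^+_2$ agree on $\fc(X_0)$ (using $P^+_k u_k=s_k P^+_0$ and $F_1 s_1=F_2 s_2$), so the bottom pushout induces $G\colon\fc(X)\to D$ with $Gv_k=F_kP^+_k$; then $H:=G\iota$ satisfies $Ht_k=Gv_k\iota_k=F_kP^+_k\iota_k=F_k$, giving existence, while uniqueness follows since by the generation lemma any functor out of $\fc(X,A)$ is determined by its restrictions along $t_1,t_2$. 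Thus the top square is a pushout with cocone $t_1,t_2$. The retraction is built the same way: $t_1P^+_1$ and $t_2P^+_2$ agree on $\fc(X_0)$, so the bottom pushout induces $P\colon\fc(X)\to\fc(X,A)$ with $Pv_k=t_kP^+_k$, which makes the two side faces of the cube commute; that $P\iota=\Id$ holds on the images of $t_1,t_2$ by $P^+_k\iota_k=\Id$ and $\iota t_k=v_k\iota_k$, and hence everywhere by the generation lemma.

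The main obstacle is precisely the generation lemma, and within it the fact that the van Kampen subdivision of a generic dipath passes through points of $X_0$ that need not lie in $A_0$: a naive subdivision factors $[\gamma]$ in $\fc(X)$ but not in the subcategory $\fc(X,A)$. It is the compatibility of the chosen future-retract data together with naturality of the reflection units that lets me replace those crossing points by their reflections without altering the morphism, and this is the only place the full strength of ``compatible'' is used. The dual statement for past retracts (coreflective subcategories) is obtained by reversing all dipaths, i.e.\ by the same argument applied in $\fc(X)^{\op}$.
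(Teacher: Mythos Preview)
Your argument is correct and rests on the same core device as the paper: subdivide a dipath via Lebesgue, then push the interior division points into $A$ using the reflection units and telescope via naturality. This is exactly the paper's Lemma~\ref{lemma:main}. Where you diverge is in packaging. The paper proves the top square is a pushout \emph{from scratch} (Theorem~\ref{thm:pushoutAinX}): it defines the comparison functor $F$ by choosing a decomposition and then checks well-definedness in three steps, the last of which (independence of the representative of $[\gamma]$) requires subdividing a homotopy $I\times I\to X$ and reflecting each small square. You instead treat Grandis' van Kampen theorem for $\fc(X)$ as a black box: you push $F_kP^+_k$ through the bottom pushout to get $G$ on $\fc(X)$, then set $H=G\iota$, so well-definedness and homotopy invariance are inherited for free, and your generation lemma is used only for uniqueness (and for $P\iota=\Id$). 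This is genuinely cleaner, since it never reopens the homotopy-subdivision argument. One small slip: after a raw Lebesgue subdivision the interior points $p_i$ need not lie in $X_0$; you should say that after merging consecutive pieces lying in the same $X_k$ the $\varepsilon_i$ alternate, so each interior $p_i\in X_{\varepsilon_i}\cap X_{\varepsilon_{i+1}}=X_0$ and your reflection $p_i^+\in A_0$ is legitimate. With that adjustment your proof goes through.
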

  The dual statement holds for past retracts.

\medskip

We prove a more general version of Theorem~\ref{thm:vanKampen} (Theorem~\ref{thm:vanKampenABX}), for triples $A \subseteq B \subseteq X$.
This allows us to apply the theorem inductively to obtain an analogous theorem for chains of compatible future retracts and past retracts (Theorem~\ref{thm:vanKampenChain}). We use this to obtain a van Kampen theorem for extremal models (Theorem~\ref{thm:vanKampenExtremalModels}).
A simple application is given in Example~\ref{example:vanKampen}.

\section{Preliminaries} \label{section:preliminaries}

\subsection{Directed spaces and topological spaces}
\label{sec:directedSpaces}

We start by briefly relating directed spaces to topological spaces. Let $\Top$ and  $\dTop$ denote the categories of topological spaces, and \mbox{d--spaces}, respectively.

\begin{lemma}
  The underlying functor $U: \dTop \to \Top$, given by $U(X,dX) = X$ and
  $U(f) = f$ has a left adjoint $F$ given by the constant paths. That
  is, $F(X) = (X,dX)$ where $dX$ is the set of constant paths and
  $F(f) = f$.  The functor $U$ also has a right adjoint,
  $G$, given by all paths. That is, $G(X) = (X,dX)$
  where $dX$ is the set of all (ordinary) paths in $X$, and $G(f)=f$.
\end{lemma}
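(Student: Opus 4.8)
The plan is to verify the two adjunctions
$F \dashv U$ and $U \dashv G$ directly by exhibiting natural
bijections on hom-sets, since the functors are explicitly described
and the underlying maps are identities. First I would check that
$F$, $U$, and $G$ are genuinely well-defined functors. For $U$ this
is immediate. For $F$ I must check that the constant-path set on a
space $X$ satisfies the three d-space axioms: constant paths are
present (trivially), reparametrizations of a constant path are
constant, and concatenations of constant paths with matching endpoints
are constant. For $G$ I must check that the set of \emph{all} paths
satisfies the axioms, which is clear since reparametrization and
concatenation of continuous maps are continuous. I would also note
that on morphisms every continuous map preserves constant paths and
trivially preserves all paths, so the dimap conditions are met and
$F(f) = f$, $G(f) = f$ are legitimate.

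Next I would establish $F \dashv U$. Given a topological space $X$
and a d-space $(Y, dY)$, I claim there is a natural bijection
$\dTop(F(X), (Y,dY)) \isom \Top(X, U(Y,dY))$. Since the underlying
sets and spaces are unchanged by $F$ and $U$, both sides are carried
by the same underlying set of continuous maps $X \to Y$; the content
is that the dimap condition on the left is automatic. Indeed, a
continuous $f : X \to Y$ is a dimap $F(X) \to (Y,dY)$ iff it sends
every path in $dF(X)$ into $dY$, but $dF(X)$ consists only of constant
paths $c_x$, and $f \circ c_x = c_{f(x)}$ lies in $dY$ by the first
d-space axiom. Hence every continuous map is automatically a dimap
out of $F(X)$, giving the bijection; naturality in both variables is
the usual check that it commutes with composition, which holds because
both functors act as the identity on underlying maps.

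Dually, I would establish $U \dashv G$ via a natural bijection
$\Top(U(Z,dZ), Y) \isom \dTop((Z,dZ), G(Y))$. Again both sides share
the same underlying continuous maps $Z \to Y$, and here the point is
that the dimap condition into $G(Y)$ is automatic: a continuous
$g : Z \to Y$ is a dimap $(Z,dZ) \to G(Y)$ iff $g(dZ) \subseteq dG(Y)$,
but $dG(Y)$ is the set of \emph{all} paths in $Y$, so for any
$\gamma \in dZ$ the composite $g \circ \gamma$ is automatically a path
in $Y$ and hence lies in $dG(Y)$. Thus every continuous map is a dimap
into $G(Y)$, and the bijection and its naturality follow as before.

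None of the steps presents a genuine obstacle; this is essentially a
bookkeeping verification. If there is any delicate point, it is the
closure axioms for $F$ — confirming that constant paths really are
closed under the prescribed concatenation, since the definition of
concatenation reparametrizes on $[0,\frac{1}{2}]$ and
$[\frac{1}{2},1]$, but the concatenation of $c_x$ with $c_x$ is again
$c_x$, so this is routine. The essential structural observation, which
I would emphasize, is that $F$ is left adjoint precisely because it
imposes the \emph{smallest} directed structure (forcing every map out
of it to be a dimap), while $G$ is right adjoint because it imposes
the \emph{largest} directed structure (forcing every map into it to be
a dimap); this is the standard pattern for free/cofree structures over
a forgetful functor.
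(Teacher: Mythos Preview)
Your proposal is correct and takes essentially the same approach as the paper: both arguments establish the adjunctions by exhibiting the natural bijections $\dTop(FX,(Y,dY)) \isom \Top(X,Y)$ and $\Top(X,Y) \isom \dTop((X,dX),GY)$ on hom-sets. The paper's proof is in fact just these two isomorphisms stated without further comment, so your version is a strictly more detailed unpacking of the same idea.
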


\begin{proof}
The following are natural isomorphisms:
\begin{gather*}
  \dTop(FX, (Y,dY)) \isom \Top(X,Y) \text{, and}\\
  \Top(X,Y) \isom \dTop((X,dX),GY). \qedhere
\end{gather*}
\end{proof}

\subsection{Future retracts and past retracts}

For the convenience of the reader, we define past retracts explicitly.

\begin{definition} \label{def:pastRetract} 
A \emph{past retract} of $\fc(X)$   is a subspace $A \subseteq X$ together with a directed homotopy class of   dipaths $[\gamma_x]$ with $\gamma_x(1) = x$ and $\gamma_x(0) =: x^- \in A$   such that for any $[\gamma]:a \to x$ with $a \in A$, there is a unique   morphism making the following diagram commute.
\[
\psset{arrows=->,nodesep=3pt,rowsep=1cm,colsep=1cm}
\begin{psmatrix}
x^- & x\\
a
\everypsbox{\scriptstyle}
\ncLine{1,1}{1,2}^{[\gamma_x]}
\ncLine{2,1}{1,2}\Bput{[\gamma]}
\ncLine[linestyle=dashed]{2,1}{1,1}
\end{psmatrix}
\]
Again, we also insist that for $a \in A$, $[\gamma_a] = [\Id_a]$.
We obtain a functor $P^-: \fc(X) \to \fc(X,A)$, with $P^-(\fc(\iota)) = \Id_{\fc(X,A)}$. 
\end{definition}

\begin{example}
The following past retract is dual to the previous example of a future retract.
\begin{center}
\psset{unit=2.5cm}
\begin{pspicture}(1,1)
\psset{fillstyle=solid,fillcolor=gray1,linecolor=black}
\psframe(0,0)(1,1)
\psset{fillcolor=white,linecolor=black}
\psframe(0.3333,0.3333)(0.6666,0.6666)
\psset{linecolor=black,fillstyle=none}
\psline{->}(0.3,0)(0.3001,0)
\psline{->}(0,0.55)(0,0.5501)
\psdots(0,0)(0.6666,0.6666)
\uput[l](0,0){$a$}
\uput[dl](0.6666,0.6666){$b$}
\psset{linecolor=yellow,linestyle=dotted}
\psline(0.6666,1)(0.6666,0.6666)(1,0.6666)
\psset{linecolor=blue,linestyle=solid,doubleline=true,doublecolor=gray1,arrowscale =1.5}
\psline{->}(0.2222,0.2222)(0.1111,0.1111)
\psline{->}(0.8888,0.8888)(0.7777,0.7777)
\end{pspicture}
\end{center}
\end{example}

We now show that future retracts and past retracts have a succinct
categorical definition. In fact, this is how they were first defined
by Grandis~\cite{grandis:shape} (who defined them for the fundamental category of a preordered space). Recall that a future retract induces
a functor $P^+: \fc(X) \to \fc(X,A)$ with $P^+(\fc(\iota)) =
\Id_{\fc(X,A)}$, and that a past retract induces a functor $P^-: \fc
\to \fc(X,A)$ with $P^-(\fc(\iota)) = \Id_{\fc(X,A)}$.

\begin{proposition}
  There is a bijection between future retracts $\iota:A \subseteq X$ and   adjunctions
  \begin{equation*}
    P^+: \fc(X) \adjn \fc(X,A): \fc(\iota)
  \end{equation*}
with $P^+(\fc(\iota)) = \Id_{\fc(X,A)}$.
\end{proposition}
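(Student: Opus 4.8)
The plan is to recognize the factorization condition of Definition~\ref{def:futureRetract} as the assertion that each $[\gamma_x]\colon x \to x^+$ is a universal arrow from the object $x$ to the inclusion $\fc(\iota)\colon \fc(X,A)\to\fc(X)$, and then to translate between the universal-arrow and unit-of-an-adjunction descriptions of an adjoint functor.

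Given a future retract $(A,\{[\gamma_x]\})$, I would take the functor $P^+$ already produced in the text, set $\eta_x := [\gamma_x]$, and observe that $\eta_x\colon x\to\fc(\iota)(x^+)$ together with $x^+$ is exactly a universal arrow from $x$ to $\fc(\iota)$: this is the existence-and-uniqueness clause of the definition. Assembling these universal arrows yields the adjunction $P^+\dashv\fc(\iota)$ with unit $\eta$. The clause $[\gamma_a]=[\Id_a]$ for $a\in A$ then reads $\eta_a=\Id_a$; this forces $P^+a=a$, and evaluating $P^+$ on a morphism $[\gamma]\colon a\to a'$ of $\fc(X,A)$ from the defining commutative square (in which both vertical legs $[\gamma_a],[\gamma_{a'}]$ are identities) gives $P^+([\gamma])=[\gamma]$. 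Hence $P^+\fc(\iota)=\Id_{\fc(X,A)}$, with identity counit.

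Conversely, from an adjunction $P^+\dashv\fc(\iota)$ with unit $\eta$ and $P^+\fc(\iota)=\Id_{\fc(X,A)}$, I would set $x^+:=P^+x\in A$ and $[\gamma_x]:=\eta_x$, a homotopy class of dipaths from $x$ to $x^+$. The universal property of the unit is word-for-word the factorization requirement of the definition, giving the first clause. For the normalization clause, I would use that $\fc(\iota)$ is fully faithful (being the inclusion of a full subcategory), so the counit $\epsilon$ is a natural isomorphism; the strictness hypothesis $P^+\fc(\iota)=\Id$, read as the counit being the identity, combines with the triangle identity $\fc(\iota)\epsilon\circ\eta\fc(\iota)=\Id$ to yield $\eta_a=\Id_a$, i.e. $[\gamma_a]=[\Id_a]$, for $a\in A$. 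Thus $(A,\{[\gamma_x]\})$ is a future retract.

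Finally I would check that the two passages are mutually inverse. Both preserve the object assignment $x^+=P^+x$ and the unit $\eta_x=[\gamma_x]$ on the nose; and since naturality of $\eta$ reproduces exactly the commutative square defining $P^+$ on morphisms, a left adjoint is recovered unambiguously from its unit, so no further data can vary. I expect the only genuine subtlety to be the normalization clause: the equation $P^+\fc(\iota)=\Id$ must be understood to encode $\eta_a=\Id_a$ (equivalently, an identity counit), not merely an equality of underlying functors, because with the latter alone the unit on $A$ could be a nontrivial automorphism coming from a reversible loop, which would break the bijection. Pinning this down via fully-faithfulness of $\fc(\iota)$ and the triangle identities is what makes the correspondence an honest bijection.
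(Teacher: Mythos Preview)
Your proposal is correct and follows essentially the same approach as the paper: both directions identify the future-retract data $[\gamma_x]$ with the unit of the adjunction via the universal-arrow characterization of a left adjoint. Your treatment is in fact more thorough than the paper's terse proof, which omits the verification of the normalization clause $[\gamma_a]=[\Id_a]$ in the backward direction and does not address the subtlety you flag about reading $P^+\fc(\iota)=\Id$ as an identity counit.
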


\begin{proof}
  $(\Rightarrow)$ We've already shown that a future retract defines a
  functor $P^+: \fc(X) \to \fc(X,A)$ with $P^+(\fc(\iota)) =
  \Id_{\fc(X,A)}$. The assignment $\eta_x: x \xto{[\gamma_x]} x^+$ is
  universal from $x$ to $\fc(\iota)$, and determines a natural
  transformation $\eta: \Id_{\fc(X)} \to \fc(\iota)P^+$. Therefore
  $P^+$ is the left adjoint of $\fc(\iota)$.

  $(\Leftarrow)$ Assume we are given an adjunction $P^+: \fc(X) \adjn
  \fc(X,A): \fc(\iota)$. For $x \in X$, the unit $\eta_x:x \to x^+$ is
  universal from $x$ to $\fc(\iota)$. That is, there is a unique
  morphism making the following diagram commute.
\[
\psset{arrows=->,nodesep=3pt,rowsep=1cm,colsep=1cm}
\begin{psmatrix}
x & x^+\\
  & a
\everypsbox{\scriptstyle}
\ncLine{1,1}{1,2}^{\eta_x}
\ncLine[linestyle=dashed]{1,2}{2,2}_{[\gamma]}
\ncLine{1,1}{2,2}
\end{psmatrix} \qedhere
\]
\end{proof}

Since the inclusion of the full subcategory $\fc(X,A)$ has a left adjoint, we say that $\fc(X,A)$ is a \emph{full reflective subcategory} of $\fc(X)$.
Dually, past retracts are equivalent to \emph{full coreflective subcategories}.

\begin{proposition}
  There is a bijection between past retracts $\iota:A \subseteq X$ and
  adjunctions
  \begin{equation*}
    \fc(\iota): \fc(X,A) \adjn \fc(X): P^-
  \end{equation*}
  with $P^-(\fc(\iota)) = \Id_{\fc(X,A)}$.
\end{proposition}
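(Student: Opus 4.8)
The plan is to dualize the proof of the preceding proposition: a coreflective subcategory is precisely a reflective subcategory of the opposite category, so every universal (initial) arrow appearing in that argument should be replaced by a couniversal (terminal) one, and the direction of the adjunction flips. Informally, the statement is the future-retract proposition read in $\fc(X)^{\op}$ (the fundamental category of $X$ with all dipaths reversed), under which past retracts of $X$ become future retracts. Rather than invoke that device, I would spell out the direct dualization.

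$(\Rightarrow)$ Given a past retract $\iota: A \subseteq X$, we have already noted in Definition~\ref{def:pastRetract} that it determines a functor $P^-: \fc(X) \to \fc(X,A)$ satisfying $P^-(\fc(\iota)) = \Id_{\fc(X,A)}$. The key point is that each arrow $\epsilon_x: x^- \xto{[\gamma_x]} x$ is couniversal from $\fc(\iota)$ to $x$: the defining diagram of Definition~\ref{def:pastRetract} says exactly that every morphism $[\gamma]: a \to x$ with $a \in A$ factors uniquely through $\epsilon_x$, i.e. $\epsilon_x$ is terminal in the comma category $\fc(\iota)\downarrow x$. These arrows assemble into a natural transformation $\epsilon: \fc(\iota)P^- \to \Id_{\fc(X)}$, which serves as the counit exhibiting $\fc(\iota) \dashv P^-$.

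$(\Leftarrow)$ Conversely, given an adjunction $\fc(\iota) \dashv P^-$ with $P^-(\fc(\iota)) = \Id_{\fc(X,A)}$, general adjunction theory gives that the counit $\epsilon_x: x^- \to x$, where $x^- := \fc(\iota)P^- x \in A$, is couniversal from $\fc(\iota)$ to $x$. Reading off this couniversal property recovers verbatim the unique-factorization diagram of Definition~\ref{def:pastRetract}, so setting $[\gamma_x] := \epsilon_x$ defines a past retract; the normalization $[\gamma_a] = [\Id_a]$ for $a \in A$ follows because $P^-(\fc(\iota)) = \Id_{\fc(X,A)}$ forces the counit to be the identity on the image of $\fc(\iota)$.

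The only step that needs genuine care — rather than mechanical symbol-pushing — is keeping the variance straight: for past retracts the inclusion $\fc(\iota)$ is the \emph{left} adjoint and $P^-$ the right adjoint, opposite to the future case, so the governing arrows are terminal rather than initial. Once this is pinned down, checking that the two assignments (past retract $\mapsto$ adjunction and adjunction $\mapsto$ past retract) are mutually inverse is routine, since each is determined by the same couniversal property.
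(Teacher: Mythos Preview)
Your proposal is correct and takes essentially the same approach as the paper: the paper gives no separate proof for this proposition, relying on the reader to dualize the argument for future retracts, which is precisely what you do. Your explicit identification of $\epsilon_x = [\gamma_x]$ as a couniversal arrow from $\fc(\iota)$ to $x$, and your care in noting that $\fc(\iota)$ is now the \emph{left} adjoint, makes the duality precise where the paper leaves it implicit.
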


\begin{remark}
  It follows that for future retracts and past retracts we have the following
  natural isomorphisms. For $x\in X$ and $a \in A$,
  \begin{gather} \label{eq:retractIsos}
    \fc(X,A)(x^+,a) \isom \fc(X)(x,a)\\
    \fc(X)(a,x) \isom \fc(X,A)(a,x^-)
  \end{gather}
\end{remark}

Generalizing Definitions \ref{def:futureRetract} and \ref{def:pastRetract} in the present language, we have:

\begin{definition} \label{def:futurePastRetract}
  A future (past) retract of $\fc(X,A)$ is a full (co)reflective subcategory $\fc(X,B)$, with $P^{\pm}(\fc(\iota)) = \Id_{\fc(X,A)}$.
\end{definition}

\section{The fundamental bipartite graph}
\label{sec:fbg}

Let $X$ have an extremal model: a chain of future retracts and past
retracts
\begin{equation*} 
  \fc(X) \xto{P^+_1} \fc(X,X_1) \xto{P^-_2}  \fc(X,X_2) \xto{P^+_3} \ldots \xto{P^{\pm}_n} \fc(X,A),
\end{equation*}
such that $\Ext(X) \subseteq A$.

\begin{proposition} \label{prop:injection}
  An extremal model induces an injection of fundamental bipartite graphs.
\end{proposition}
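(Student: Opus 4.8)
The plan is to show that the composite functor $P\colon \fc(X) \to \fc(X,A)$ of the chain \eqref{eq:extremalModel} fixes the extremal objects of $X$ and induces a bijection on every hom-set between two such objects; the asserted injection of fundamental bipartite graphs then follows at once. The whole argument reduces to analyzing a single retract and composing.

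First I would record the structural observation that the object-sets of the subcategories in the chain are nested, $A \subseteq X_{n-1} \subseteq \cdots \subseteq X_1 \subseteq X$, since each $P^{\pm}_k$ is the (co)reflection onto a \emph{full} subcategory of the preceding one. Because $\Ext(X) \subseteq A$ by the definition of an extremal model, it follows that $\Ext(X) \subseteq X_k$ for every $k$; in particular the extremal objects of $\fc(X)$ are present at every stage of the chain. I would also note that an object minimal (maximal) in $\fc(X)$ remains minimal (maximal) in any full subcategory containing it, so $\Ext(X) \subseteq \Ext(\fc(X,A))$, and on objects $P$ restricts to the inclusion $\Ext(X) \incl \Ext(\fc(X,A))$, which is injective.

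Next comes the key hom-set computation, carried out one step at a time. Consider a single future retract $P^+\colon \fc(X,B) \to \fc(X,C)$ occurring in the chain, so that $\fc(X,C)$ is full reflective in $\fc(X,B)$ with $P^+\fc(\iota) = \Id_{\fc(X,C)}$ and $C \supseteq \Ext(X)$. For extremal objects $e,e' \in \Ext(X) \subseteq C$ the adjunction isomorphism
\[ \fc(X,C)(P^+ e, e') \isom \fc(X,B)(e, e') \]
applies; since $e,e' \in C$ we have $P^+ e = e^+ = e$ and the unit $\eta_{e'} = [\Id_{e'}]$, so naturality of $\eta$ shows this isomorphism is realized by $P^+$ itself, whence $P^+\colon \fc(X,B)(e,e') \to \fc(X,C)(e,e')$ is a bijection. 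The past-retract case is identical after dualizing, using the coreflection isomorphism $\fc(X,B)(e,e') \isom \fc(X,C)(e, (e')^-)$ together with $(e')^- = e'$. Composing over the whole chain, $P$ induces a bijection $\fc(X)(e,e') \to \fc(X,A)(e,e')$ for all $e,e' \in \Ext(X)$.

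Finally I would assemble these facts: on objects $P$ is the injection $\Ext(X) \incl \Ext(\fc(X,A))$, and on each edge-set between extremal objects it is a bijection, so the restriction of $P$ to $\fc(X,\Ext(X))$ is an injective morphism of bipartite graphs into the fundamental bipartite graph $\fc(X,\Ext(\fc(X,A)))$ of the model. The one point requiring genuine care — and the heart of the proof — is the nesting observation of the first step, which forces $\Ext(X)$ to lie inside every $X_k$: this is exactly what guarantees that at each stage the extremal objects are fixed by the (co)reflection, so that the adjunction isomorphism degenerates into the map induced by $P^{\pm}_k$ and yields a genuine bijection rather than merely a one-sided comparison of hom-sets.
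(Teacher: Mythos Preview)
Your proof is correct, and the pivotal observation $\Ext(\fc(X)) \subseteq \Ext(\fc(X,A))$ matches the paper's. The paper, however, stops right there: since both $\fc(X,\Ext(\fc(X)))$ and $\fc(X,\Ext(\fc(X,A)))$ are full subcategories of $\fc(X)$ on nested object sets, the first is automatically a full subcategory of the second, and that inclusion \emph{is} the asserted injection. The chain of retracts is never invoked beyond the hypothesis $\Ext(X)\subseteq A$. Your step-by-step adjunction argument is sound but unnecessary: because every $\fc(X,X_k)$ is full in $\fc(X)$, the hom-sets $\fc(X,B)(e,e')$ and $\fc(X,C)(e,e')$ are literally equal for $e,e'\in C$, and the condition $P^{\pm}\fc(\iota)=\Id$ already forces $P^{\pm}$ to act as the identity on them. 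What your extra work does buy is the explicit identification of the injection as the restriction of the composite functor $P$, something the paper leaves implicit.
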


\begin{proof}
  By definition, $\Ext(\fc(X)) \subseteq A$. For $a \in A$, since
  $\fc(X,A)$ is a subcategory of $\fc(X)$, if $a \notin
  \Ext(\fc(X,A))$ then $a \notin \Ext(\fc(X))$. Combining these two
  facts we obtain that 
  \begin{equation*}
    \Ext(\fc(X)) \subseteq \Ext(\fc(X,A)).
  \end{equation*}
  Thus
  $\fc(X,\Ext(\fc(X)))$ is a subcategory of $\fc(X,\Ext(\fc(X,A)))$.
\end{proof}

The map induced by future retracts and past retracts on the
fundamental bipartite graph is not surjective in general. For example,
take the unit interval $[0,1]$ and all (undirected) paths, and let $x
\in [0,1]$. Then the map $[0,1] \to \{x\}$ induces a past and future
retract. However $\Ext([0,1])$ is empty while $\Ext(\{x\}) = \{x\}$.

We will show that if a \mbox{d--space} $X$ is a compact pospace, then
this map is in fact an isomorphism.

\begin{definition}
  A \emph{pospace} is a topological spaces $X$, together with a reflexive, transitive, anti-symmetric relation $\leq$, such that $\leq$ is a closed subset of $X \times X$ in the product topology.
\end{definition}

Given a \mbox{d--space} $X$, the fundamental category $\fc(X)$ induces
a preorder on $X$. Assume that this order makes $X$ into a compact
pospace.  Let
\begin{equation} \label{eq:compactModel}
  \fc(X) \xto{P^+_1} \fc(X,X_1) \xto{P^-_2}  \fc(X,X_2) \xto{P^+_3} \ldots \xto{P^{\pm}_n} \fc(X,A),
\end{equation}
be an extremal model of $X$ in which for $1\leq i \leq n$, $X_i$ is compact.

\begin{theorem} \label{prop:compactPospace}
  Such an extremal model of a compact pospace induces an isomorphism of
  fundamental bipartite graphs.
\end{theorem}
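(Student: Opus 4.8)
The plan is to upgrade the injection already supplied by Proposition~\ref{prop:injection} to an isomorphism by showing it is a bijection on objects. Since the fundamental bipartite graphs $\fc(X,\Ext(\fc(X)))$ and $\fc(X,\Ext(\fc(X,A)))$ are both full subcategories of $\fc(X)$, once their object sets coincide the inclusion functor is automatically an isomorphism (equal objects force equal hom-sets). Proposition~\ref{prop:injection} already gives $\Ext(\fc(X)) \subseteq \Ext(\fc(X,A))$, so the entire task reduces to proving the reverse inclusion $\Ext(\fc(X,A)) \subseteq \Ext(\fc(X))$.

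The one genuine input is a standard property of compact pospaces, which I would isolate as a lemma: \emph{in a compact pospace every point lies below some maximal point and above some minimal point.} To prove it, fix $x$ and observe that the up-set $\{y : x \leq y\}$ is the slice $\{y : (x,y) \in {\leq}\}$ of the closed order relation, hence closed and therefore compact. For any chain $C$ in this up-set the closed sets $\{\,\{z : c \leq z\} : c \in C\,\}$ have the finite intersection property, since a largest element of any finite subchain belongs to all of them; by compactness the total intersection is nonempty, and any point of it is an upper bound for $C$ still lying in the up-set. Zorn's Lemma then produces a maximal element of the up-set, which is maximal in $X$. The minimal case is dual.

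With the lemma available, the reverse inclusion is a short contradiction. Suppose $a \in A$ is maximal in $\fc(X,A)$ but not maximal in $\fc(X)$. Then there is a morphism $a \to x$ in $\fc(X)$ with $x \neq a$, so $a \leq x$ in the induced order. By the lemma choose a maximal point $b$ of $X$ with $x \leq b$; then $a \leq b$, and antisymmetry of the pospace order forces $a \neq b$ (otherwise $a \leq x \leq a$ would give $x = a$). Since $b$ is maximal, $b \in \Ext(X)$, and the extremal-model hypothesis $\Ext(X) \subseteq A$ gives $b \in A$. As $\fc(X,A)$ is a full subcategory of $\fc(X)$, the morphism $a \to b$ survives in $\fc(X,A)$, contradicting maximality of $a$ there. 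The dual argument handles minimal objects. Hence $\Ext(\fc(X,A)) = \Ext(\fc(X))$, the two bipartite graphs have identical objects and identical hom-sets, and the injection is an isomorphism.

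The main obstacle is the compact pospace lemma: this is the only place where compactness is truly used, converting the order-theoretic assertion that every point is dominated by a maximal one into a provable statement via the finite-intersection-property and Zorn argument. Everything else is routine categorical bookkeeping built on Proposition~\ref{prop:injection} and the defining condition $\Ext(X) \subseteq A$. I would remark that this global argument uses only compactness of $X$ itself; alternatively one can proceed stagewise along the chain~\eqref{eq:compactModel}, in which case the per-stage hypothesis that each $X_i$ is compact is what guarantees that each restricted order $\fc(X,X_i)$ is again a compact pospace, so that $\Ext$ is preserved at every step.
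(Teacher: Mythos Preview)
Your proof is correct and takes a genuinely different route from the paper's.

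The paper argues by induction along the chain~\eqref{eq:compactModel}: at each stage it shows that a future retract $P^+:\fc(X,B)\to\fc(X,A)$ carries $\Ext(\fc(X,B))$ onto $\Ext(\fc(X,A))$, using the retract structure explicitly (via $b\mapsto b^+$) and invoking compactness of the intermediate stage $B$ to locate a minimal element below a given point. The per-stage compactness hypothesis on each $X_i$ is what drives that induction.

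Your argument bypasses the chain entirely. You use only two global facts: the defining condition $\Ext(X)\subseteq A$ and compactness of $X$ itself. The single substantive step---your lemma that in a compact pospace every point lies below a maximal element and above a minimal one---is the same order-theoretic fact the paper cites from \cite{wallace:aFixedPointTheorem,continuousLatticesAndDomains}, but you deploy it once, in $X$, rather than repeatedly in each $X_i$. The contradiction you set up then needs nothing about the retracts beyond fullness of $\fc(X,A)$ in $\fc(X)$.

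What each approach buys: your version is shorter, uses a strictly weaker hypothesis (compactness of $X$ suffices; the $X_i$ need not be compact), and makes transparent that the retract structure plays no role in the equality $\Ext(\fc(X,A))=\Ext(\fc(X))$ once $\Ext(X)\subseteq A$ is assumed. The paper's stagewise argument, on the other hand, tracks more closely how extremal points behave under a single future or past retract, which may be of independent interest. You correctly anticipated this alternative in your closing remark.
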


\begin{proof}
  Let $X$ be as above. Our proof is by induction on the number of
  retracts in the extremal model.

  Let $P: \fc(X) \to \fc(X,B)$ be an extremal model as in
  \eqref{eq:compactModel} and let $P^+: \fc(X,B) \to \fc(X,A)$ be a
  future retract. By Proposition~\ref{prop:injection}, $P^+ \circ P$
  is injective on extremal points. We will show that $P^+:
  \Ext(\fc(X,B)) \onto \Ext(\fc(X,A))$. It will follow by induction
  that $P^+ \circ P: \Ext(\fc(X)) \isomto \Ext(\fc(X,A))$.

  Let $a$ be a maximal point in $\fc(X,A)$. Let $b \in B$, with $a
  \leq b$. Then $a \leq b \leq b^+$. Since $b^+ \in A$ and $a$ is
  maximal in $\fc(X,A)$, $a=b^+$. Since $\leq$ is anti-symmetric, it
  follows that $a=b$. Therefore, $a$ is maximal in $\fc(X)$. Thus the
  maximal points in $\fc(X,A)$ are also maximal in $\fc(X,B)$.

  Let $a$ be a minimal point in $\fc(X,A)$.  Since $A \subseteq B$, $a
  \in B$.  By assumption $\fc(X)$ induces an order $\leq$ on $X$ such
  that $X$ is a pospace. Order $B$ with the order induced by
  $\leq$. This coincides with the order induced by $\fc(X,B)$. It is
  well--known and easy to check that the induced order on a subspace
  of a pospace gives it the structure of a pospace. By assumption, $B$
  is compact.  Since $B$ is a compact pospace, there is a minimal
  point $b \in \fc(X,B)$ such that $b \leq
  a$ \cite{wallace:aFixedPointTheorem} \cite[Proposition VI-5.3]{continuousLatticesAndDomains}. Since $P^+$ is a future
  retract, $b^+ \leq a$. Since $a$ is minimal in $\fc(X,A)$ and $b^+
  \in A$ it follows that $a = b^+$.
\end{proof}

We remark that the compact condition is necessary. Consider $\R$ with
$d\R$ all nondecreasing paths $[0,1] \to \R$. Then the induced order
is the usual total order on $\R$ and it makes $\R$ into a pospace. There is
a future retract $P^+$ from $\R$ to the non-negative real numbers $\R_{\geq 0}$, where $x^+ = x$ if $x \geq 0$ and $x^+ = 0$ if $x<0$. However $\Ext(\R)$ is empty while $\Ext(\R_{\geq 0}) = \{0\}$.

\section{Directed van Kampen theorems}
\label{sec:directed-van-kampen}

We start this section by proving a version of the Seifert -- van
Kampen Theorem for full subcategories of the fundamental category
(Theorem~\ref{thm:vanKampen}). Our proof follows Grandis' proof of the
van Kampen Theorem for \mbox{d--spaces}~\cite{grandis:dht1}, which in turn
follows R. Brown's proof of the usual van Kampen Theorem for
groupoids~\cite{brown:groupoidsAndVanKampen,brown:book}.  Instead of
working with $A\subseteq X$ and the full subcategory $\fc(X,A)$ of
$\fc(X)$, we work in the more general setting $A \subseteq B \subseteq
X$ and the full subcategory $\fc(X,A) \subseteq \fc(X,B)$. The former
can be obtained from the latter by setting $B=X$.

Next we prove a van Kampen Theorem for past and future retracts. This construction is shown to preserve the non-collapsing property. 
Finally we prove a van Kampen Theorem for chains of past retracts and future retracts. As a corollary, we obtain a van Kampen theorem for extremal models. 

Let $X_1, X_2 \subseteq X$ be \mbox{d--spaces} with $X$ equal to the union of the interiors of $X_1$ and $X_2$.  Let $X_0 = X_1 \cap X_2$. With these statements we assume
that the \mbox{d--space} structure on $X$ is induced by the \mbox{d--space} structures on
$X_1$ and $X_2$. That is, 
dipaths in $X$ are concatenations of dipaths in $X_1$ and $X_2$.

  Similarly let $A_1, A_2 \subseteq A$ be
\mbox{d--spaces} with $A = \Int(A_1) \cup \Int(A_2)$ and let $B_1, B_2 \subseteq B$ be
\mbox{d--spaces} with $B = \Int(B_1) \cup \Int(B_2)$ . Let $A_0 = A_1 \cap
A_2$ and $B_0 = B_1 \cap
B_2$.
Assume that for $k=1,2,3$, $A_k \subseteq B_k \subseteq X_k$. Thus we have the following commutative diagram of \mbox{d--spaces}.

\begin{equation} \label{cd:ABX}
\psset{arrows=->,nodesep=3pt,rowsep=0.5cm,colsep=1cm}
\begin{psmatrix}
& A_0 & & A_2\\
A_1 & & A\\
& B_0 & & B_2\\
B_1 & & B\\
& X_0 & & X_2\\
X_1 & & X
\everypsbox{\scriptstyle}
\ncLine{1,2}{1,4}\Aput{i'_2}
\ncLine{1,2}{2,1}\Bput{i'_1}
\ncLine{1,4}{2,3}\bput(0.7){j'_2}
\ncLine{2,1}{2,3}\aput(0.75){j'_1}
\ncLine{1,2}{3,2}\bput(0.75){{\iota}_0}
\ncLine{2,1}{4,1}\Bput{\iota_1}
\ncLine{1,4}{3,4}\Aput{\iota_2}
\ncLine{2,3}{4,3}\bput(0.3){\iota}
\ncLine{3,2}{3,4}\aput(0.75){i_2}
\ncLine{3,2}{4,1}\Bput{i_1}
\ncLine{3,4}{4,3}\Aput{j_2}
\ncLine{4,1}{4,3}\aput(0.75){j_1}
\ncLine{4,1}{6,1}
\ncLine{3,2}{5,2}
\ncLine{3,4}{5,4}
\ncLine{4,3}{6,3}
\ncLine{5,2}{5,4}
\ncLine{5,2}{6,1}
\ncLine{5,4}{6,3}
\ncLine{6,1}{6,3}
\end{psmatrix}
\end{equation}

Furthermore, assume that $\fc(X_k,A_k) \subseteq \fc(X_k,B_k)$ is a full reflective subcategory for $k=0,1,2$ and that the following diagram commutes, where $P^+_k$ denotes the reflections.
  \begin{equation} \label{cd:Pk}
  \psset{arrows=->,nodesep=3pt,rowsep=0.25cm,colsep=1cm}
  \begin{psmatrix}
    & \fc(X_0,A_0) & & \fc(X_2,A_2)\\
    \fc(X_1,A_1)\\
    & \fc(X_0,B_0) & & \fc(X_2,B_2)\\
    \fc(X_1,B_1)
\everypsbox{\scriptstyle}
    \ncline{1,2}{1,4} \ncline{1,2}{2,1} \ncline{3,2}{1,2}>{P^+_0}
    \ncline{3,4}{1,4}>{P^+_2} \ncline{4,1}{2,1}<{P^+_1} \ncline{3,2}{3,4}
    \ncline{3,2}{4,1}
  \end{psmatrix}
  \end{equation}

The following is our main lemma. We assume \eqref{cd:ABX} and \eqref{cd:Pk} with $B_k = X_k$ for $k=0,1,2$.

\begin{lemma} \label{lemma:main}
  Let $[\gamma] \in \fc(X,A)$. Then there exist $\gamma_1, \ldots, \gamma_n$ with $[\gamma_i] \in \fc(X_1,A_i)$ or $\fc(X_2,A_2)$ such that $[\gamma] = [\gamma_1] + \ldots + [\gamma_2]$.
\end{lemma}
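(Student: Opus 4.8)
The plan is to prove the lemma by the classical subdivision argument of Brown and Grandis, with the path-reversal step replaced by an application of the reflection functors $P^+_1,P^+_2$. First I would invoke compactness of $[0,1]$: since $X=\Int(X_1)\cup\Int(X_2)$, the Lebesgue number lemma applied to $\gamma\colon[0,1]\to X$ yields a partition $0=t_0<t_1<\cdots<t_n=1$ together with a choice $k_i\in\{1,2\}$ for each $i$ so that $\gamma([t_{i-1},t_i])\subseteq X_{k_i}$. Writing $p_i=\gamma(t_i)$ and letting $\gamma^{(i)}$ be the reparametrized restriction of $\gamma$ to $[t_{i-1},t_i]$, this gives $[\gamma]=[\gamma^{(n)}]\circ\cdots\circ[\gamma^{(1)}]$ in $\fc(X)$, with each $[\gamma^{(i)}]$ a morphism of $\fc(X_{k_i})$. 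I would arrange the partition near the two ends so that $p_0=a$ and $p_n=a'$ lie in $A_{k_1}$ and $A_{k_n}$ respectively; this is possible because $a\in\Int(A_1)\cup\Int(A_2)\subseteq\Int(X_1)\cup\Int(X_2)$, so a sufficiently short initial (resp.\ terminal) subinterval maps into the very $\Int(X_k)$ that already contains the endpoint in its $A_k$.

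Next I would reflect each piece. Set $\delta_i:=P^+_{k_i}([\gamma^{(i)}])$, a morphism $p_{i-1}^+\to p_i^+$ lying in $\fc(X_{k_i},A_{k_i})$, hence in $\fc(X_1,A_1)$ or $\fc(X_2,A_2)$ as required. Naturality of the unit $[\gamma_{(-)}]$ of the reflection $P^+_{k_i}$ gives, for each $i$, the relation
\[
\delta_i\circ[\gamma_{p_{i-1}}]=[\gamma_{p_i}]\circ[\gamma^{(i)}]
\]
in $\fc(X_{k_i})$, which I push into $\fc(X)$ along $\fc(\iota_{k_i})$. A straightforward induction on $m$ then shows
\[
\delta_m\circ\cdots\circ\delta_1=[\gamma_{p_m}]\circ[\gamma^{(m)}]\circ\cdots\circ[\gamma^{(1)}],
\]
the inductive step being exactly one application of this naturality relation. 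Because $p_0=a\in A_{k_1}$ and $p_n=a'\in A_{k_n}$ force $[\gamma_{p_0}]=[\Id_a]$ and $[\gamma_{p_n}]=[\Id_{a'}]$, the case $m=n$ collapses to $\delta_n\circ\cdots\circ\delta_1=[\gamma]$, which is the desired decomposition.

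The step I expect to be the main obstacle is precisely the one where the undirected proof would insert a dipath together with its inverse at each cut point $p_i$, which is impossible here since dipaths are not reversible. The replacement is the telescoping above: reflecting the $i$-th piece builds the push-forward class $[\gamma_{p_i}]$ into the \emph{output} of $\delta_i$, and the naturality relation feeding into $\delta_{i+1}$ consumes that same class on the \emph{input} side, so the intermediate units cancel without any inversion. For this cancellation to be legitimate I must check that the data at each cut point is unambiguous: when $p_i$ is a point where the cover switches sheets, $p_i\in X_0$, and I must know that $p_i^+$ and the class $[\gamma_{p_i}]$ computed via $P^+_{k_i}$ agree with those computed via $P^+_{k_{i+1}}$. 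This is exactly the content of the compatibility diagram~\eqref{cd:Pk} (with $B_k=X_k$), which forces $p_i^+\in A_0$ and identifies the two units in $\fc(X)$. Verifying this coherence, together with the endpoint placement of the first paragraph, are the only points requiring real care; the remainder is formal manipulation of the adjunction units.
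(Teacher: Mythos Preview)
Your proposal is correct and follows essentially the same route as the paper: Lebesgue subdivision of $\gamma$ into pieces lying in $X_{k_i}$, application of the reflections $P^+_{k_i}$ to each piece, and then telescoping via the naturality squares of the units, with diagram~\eqref{cd:Pk} guaranteeing that the data at each cut point is independent of the sheet chosen. The only cosmetic difference is your explicit arrangement of the first and last subintervals so that $a\in A_{k_1}$ and $a'\in A_{k_n}$; the paper omits this because the compatibility~\eqref{cd:Pk} already forces $a^+=a$ and $[\gamma_a]=[\Id_a]$ for any applicable $P^+_k$ whenever $a\in A$, so no special endpoint handling is needed.
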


\begin{proof}
  Let $[\gamma] \in \fc(X,A)$ with $\gamma(0)=a$ and $\gamma(1)=a'$.
  By the Lebesgue number lemma, there is a number $n$ such that $\gamma\left(\left[\frac{i-1}{n},\frac{i}{n}\right]\right) \subseteq X_{k_i}$ where $k_i \in \{1,2\}$ for all $i=1,\ldots,n$.
  Let $x_i = \gamma(\frac{i}{n})$, $i=0,\ldots,n$.
  Let $\gamma_i:\dI \to X$ be given by $\gamma_i(t) = \gamma(\frac{i-1+t}{n})$. 
  Then $\gamma = \gamma_1 + \ldots + \gamma_n$, $\gamma_i: \dI \to X_{k_i}$ and $[\gamma_i] \in \fc(X_{k_i})$. The only remaining problem is that we do not have $[\gamma_i] \in \fc(X_{k_i},A_{k_i})$.

  Let $[\gamma_i]^+$ denote $P^+_{k_i}[\gamma_i]$. These maps of paths
  induce maps $x_i \mapsto x_i^+$ which are well-defined by the
  commutativity of~\eqref{cd:Pk}.  
Composing the commutative diagrams
\[
\psset{arrows=->,nodesep=3pt,rowsep=1cm,colsep=1cm}
\begin{psmatrix}
x_{i-1} & x_{i-1}^+\\
x_i  & x_i^+
\psset{shortput=nab}
\everypsbox{\scriptstyle}
\ncLine{1,1}{1,2}
\ncLine{1,1}{2,1}_{[\gamma_i]}
\ncLine{2,1}{2,2}
\ncLine{1,2}{2,2}^{[\gamma_i]^+}
\end{psmatrix}
\]
  we obtain
  \begin{equation*}
    [\gamma] = [\gamma_1] + \ldots + [\gamma_n] = [\gamma_1]^+ + \ldots + [\gamma_n]^+,
  \end{equation*} 
where $[\gamma_i]^+ \in \fc(X_{k_i},A_{k_i})$.
\end{proof}

\begin{theorem} \label{thm:pushoutAinX}
  The following diagram is a pushout of categories.

\[
\psset{arrows=->,nodesep=3pt,rowsep=1cm,colsep=2cm}
\begin{psmatrix}
\fc(X_0,A_0) & \fc(X_2,A_2)\\
\fc(X_1,A_1) & \fc(X,A)
\psset{shortput=nab}
\everypsbox{\scriptstyle}
\ncLine{1,1}{2,1}_{\fc(i'_1)}
\ncLine{1,1}{1,2}^{\fc(i'_2)}
\ncLine{2,1}{2,2}^{\fc(j'_1)}
\ncLine{1,2}{2,2}^{\fc(j'_2)}
\end{psmatrix}
\]
\end{theorem}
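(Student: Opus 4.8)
The plan is to verify the universal property of the pushout directly, following the pattern of Brown's and Grandis' proofs. So I fix a test category $\mathcal{D}$ equipped with functors $F_1\colon \fc(X_1,A_1)\to\mathcal{D}$ and $F_2\colon \fc(X_2,A_2)\to\mathcal{D}$ satisfying $F_1\circ\fc(i'_1)=F_2\circ\fc(i'_2)$, and I construct a unique functor $F\colon\fc(X,A)\to\mathcal{D}$ with $F\circ\fc(j'_1)=F_1$ and $F\circ\fc(j'_2)=F_2$. On objects, since $A=\Int(A_1)\cup\Int(A_2)\subseteq A_1\cup A_2$, every object of $\fc(X,A)$ lies in $A_1$ or $A_2$, and I set $F(a)=F_k(a)$ for any $k$ with $a\in A_k$; this is unambiguous because $F_1$ and $F_2$ agree on $\fc(X_0,A_0)$, whose objects are the points of $A_0=A_1\cap A_2$.

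On morphisms I use Lemma~\ref{lemma:main}. Any $[\gamma]\in\fc(X,A)$ decomposes as $[\gamma]=[\gamma_1]+\cdots+[\gamma_n]$ with each $[\gamma_i]\in\fc(X_{k_i},A_{k_i})$, and I define $F[\gamma]$ to be the corresponding composite of the morphisms $F_{k_i}[\gamma_i]$ in $\mathcal{D}$. Granting that this is well defined, functoriality is immediate: concatenating two dipaths concatenates their decompositions, and the trivial decomposition gives the identity. Uniqueness is then also forced, since any functor out of the cocone must send each $[\gamma_i]$ to $F_{k_i}[\gamma_i]$, and by Lemma~\ref{lemma:main} every morphism of $\fc(X,A)$ is such a composite.

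The real content, and the main obstacle, is well-definedness of $F$ on morphisms. This splits into two independences. The first is independence of the subdivision of a fixed representative dipath: two partitions have a common refinement, and refining only splits a piece $\gamma_i$ into subpaths inside the same $X_{k_i}$, which does not change the composite since each reflection $P^+_{k_i}$ is a functor; whenever a piece lands in the overlap $X_0=X_1\cap X_2$ and could be routed through either side, the compatibility $F_1\circ\fc(i'_1)=F_2\circ\fc(i'_2)$ together with the commuting square of reflections~\eqref{cd:Pk} forces the two routings to agree. The second, and genuinely harder, independence is that of the representative within its directed-homotopy class. Given a d-homotopy $H\colon\dI\times\dI\to X$ rel endpoints from $\gamma$ to $\gamma'$, a two-dimensional Lebesgue-number argument subdivides the square into a grid of subsquares, each carried by $H$ into $X_1$ or $X_2$. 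Each subsquare yields a commuting square in the relevant $\fc(X_k)$ relating its two boundary staircases; applying the reflections $P^+_k$ pushes every grid vertex into the appropriate $A_k$ so that $F_k$ can be applied, and~\eqref{cd:Pk} guarantees that the reflection of a vertex lying in $X_0$ is unambiguous. Telescoping these commuting squares from the bottom edge to the top edge of the grid --- the left and right edges being constant since $H$ fixes the endpoints --- yields $F[\gamma]=F[\gamma']$.

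Thus $F$ is well defined, a functor, and unique, which establishes the universal property and the theorem. I expect the two-dimensional subdivision of the homotopy to be the hard step; the only feature not already present in the classical groupoid van Kampen theorem is the need to interleave the reflections $P^+_k$ at the grid vertices, so that the pieces, which a priori live in the ambient categories $\fc(X_k)$, are moved into the reflective subcategories $\fc(X_k,A_k)$ coherently. That coherence is exactly what the commutativity of~\eqref{cd:Pk} provides.
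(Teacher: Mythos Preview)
Your proposal is correct and follows essentially the same route as the paper's proof: both apply Lemma~\ref{lemma:main} to decompose $[\gamma]$, define $F$ on the pieces via $F_{k_i}$, and then verify well-definedness by checking independence of the label $k_i$ (compatibility on $\fc(X_0,A_0)$), of the subdivision (common refinement), and of the representative (two-dimensional Lebesgue subdivision of the homotopy, with the reflections $P^+_k$ pushing the grid vertices into the $A_k$ and~\eqref{cd:Pk} ensuring coherence on overlaps). Your description of the homotopy-independence step is in fact more explicit than the paper's.
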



\begin{proof}
  Let $\C$ be a category.  Assume $\phi_k: \fc(X_k,A_k) \to \C$ for
  $k=1,2$ such that $\phi_1 \fc(i'_1) = \phi_2 \fc(i'_2)$.  Let
  $[\gamma] \in \fc(X,A)$ with $\gamma(0)=a$ and $\gamma(1)=a'$. Apply
  Lemma~\ref{lemma:main} to $[\gamma]$. Define $F[\gamma] =
  \phi_{k_1}[\gamma_1] + \ldots + \phi_{k_n}[\gamma_n]$, where
  addition is given by composition in $\C$.

We first remark that $F$ does not depend on the choice of $k_i$. If $\im(\gamma_i) \subset X_1 \cap X_2 = X_0$, then the compatibility of $\phi_1$ and $\phi_2$ ensures that $\phi_1[\gamma_i] = \phi_2[\gamma_i]$.

Next, $F$ does not depend on the choice of $n$: given another suitable $m$, consider the partition into $nm$ pieces.

Finally, $F$ does not depend on the choice of representative $\gamma$. Consider another $\tilde{\gamma} \simeq \gamma$.
Again, apply Lebesgue's number lemma to $I \times I$ to suitably decompose the homotopy from $\gamma$ to $\tilde{\gamma}$ into homotopies contained in either $X_1$ or $X_2$. Now apply the suitable choice of $P^+_1$ or $P^+_2$ to each of these. Use the resulting set of homotopies in $\fc(X_1,A_1)$ and $\fc(X_2,A_2)$ to obtain 
\begin{multline*}
  F[\gamma] = F[\gamma_1] + \ldots + F[\gamma_n] + F[\Id_{a'}]\\
 = F[\Id_a] + F[\tilde{\gamma}_1] + \ldots + F[\tilde{\gamma}_n] = F[\tilde{\gamma}].
\end{multline*}
Therefore $F$ is well defined.

For functoriality, notice that $F$ preserves compositions: if $\gamma,\gamma'$ have decompositions $\gamma=\gamma_1 + \ldots + \gamma_n$ and $\gamma' = \gamma'_1 + \ldots + \gamma'_m$, then $\gamma+\gamma'$ has decomposition $\gamma_1 + \ldots + \gamma_n + \gamma'_1 + \ldots + \gamma'_m$.

The uniqueness of $F$ is by construction.
\end{proof}

\begin{lemma} \label{lemma:F'}
  Given the following commutative solid--arrowed diagram. Let $F$ and $F'$ be the pushout maps.
  \[
  \psset{unit=0.5cm}
  \psset{arrows=->,nodesep=3pt,rowsep=0.5cm,colsep=1cm}
  \begin{psmatrix}
    & \fc(X_0,A_0) & & \fc(X_2,A_2)\\
    \fc(X_1,A_1) & & \fc(X,A)\\
    & \fc(X_0,B_0) & & \fc(X_2,B_2)\\
    \fc(X_1,B_1) & & \fc(X,B)\\
    & & & & & \C
\everypsbox{\scriptstyle}
    \ncline{1,2}{1,4} \ncline{1,2}{2,1} \ncline{1,2}{3,2}
    \ncline{1,4}{3,4}\bput(0.35){\fc(\iota_2)}
    \ncline{2,1}{4,1}\bput(0.4){\fc(\iota_1)} 
    \ncline{3,2}{3,4}
    \ncline{3,2}{4,1} \ncline{2,3}{4,3}\bput(0.15){\fc(\iota)} 
    \ncline{2,1}{2,3}
    \ncline{1,4}{2,3}
    \ncline{4,1}{4,3}
    \ncline{3,4}{4,3}
    \ncline{1,4}{5,6}^{\phi'_2} \ncline{3,4}{5,6}\aput(0.3){\phi_2} \ncline{2,1}{5,6}\bput(0.05){\phi'_1} \ncline {4,1}{5,6}_{\phi_1}
    \psset{linestyle=dashed}
    \ncline{2,3}{5,6}\aput(0.15){F'} \ncline{4,3}{5,6}\aput(0.1){F}
  \end{psmatrix}
  \]
  Then $F' = F \fc(\iota)$.
\end{lemma}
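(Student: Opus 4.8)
The plan is to compare $F'$ and $F\fc(\iota)$ directly on an arbitrary morphism of $\fc(X,A)$, using the explicit formula for the pushout map supplied by the proof of Theorem~\ref{thm:pushoutAinX}. That formula decomposes a morphism via Lemma~\ref{lemma:main} into pieces lying in the corner categories and then applies the appropriate leg of the cocone. The whole content of the lemma will be that the decomposition chosen for the $A$ layer, once included into the $B$ layer, computes $F\fc(\iota)$ to the value already computed by $F'$.

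First I would fix $[\gamma] \in \fc(X,A)$ and apply Lemma~\ref{lemma:main} to write $[\gamma] = \fc(j'_{k_1})[\gamma_1] + \cdots + \fc(j'_{k_n})[\gamma_n]$ with each $[\gamma_i] \in \fc(X_{k_i},A_{k_i})$ and $k_i \in \{1,2\}$. By the construction of the pushout map in Theorem~\ref{thm:pushoutAinX}, applied to the cocone $(\phi'_1,\phi'_2)$ on the top square, this gives $F'[\gamma] = \phi'_{k_1}[\gamma_1] + \cdots + \phi'_{k_n}[\gamma_n]$.

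Next I would transport the decomposition into the $B$ layer. The vertical faces of the cube of inclusions commute, so $\fc(\iota)\fc(j'_k) = \fc(j_k)\fc(\iota_k)$ for $k=1,2$; applying the functor $\fc(\iota)$ to the decomposition therefore yields
\[
\fc(\iota)[\gamma] = \fc(j_{k_1})\fc(\iota_{k_1})[\gamma_1] + \cdots + \fc(j_{k_n})\fc(\iota_{k_n})[\gamma_n],
\]
which is an admissible decomposition of $\fc(\iota)[\gamma]$ with pieces $\fc(\iota_{k_i})[\gamma_i] \in \fc(X_{k_i},B_{k_i})$. Since $F$ is well defined, its value is independent of the decomposition used (Theorem~\ref{thm:pushoutAinX}), so I may evaluate $F$ on precisely this one, obtaining $F(\fc(\iota)[\gamma]) = \phi_{k_1}(\fc(\iota_{k_1})[\gamma_1]) + \cdots + \phi_{k_n}(\fc(\iota_{k_n})[\gamma_n])$.

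Finally, the solid triangles of the hypothesised diagram give $\phi'_k = \phi_k\fc(\iota_k)$ for $k=1,2$, so $\phi_{k_i}(\fc(\iota_{k_i})[\gamma_i]) = \phi'_{k_i}[\gamma_i]$ termwise, and comparing the two expressions yields $F(\fc(\iota)[\gamma]) = F'[\gamma]$. I expect the only delicate point to be the middle step: one must be sure that a decomposition tailored to the $A$ layer remains an admissible decomposition after inclusion into the $B$ layer, so that the \emph{a priori} decomposition-dependent formula for $F$ may legitimately be evaluated on it. This is exactly what commutativity of the vertical faces of the cube guarantees, and with that compatibility secured the remainder is a routine term-by-term comparison.
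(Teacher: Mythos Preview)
Your proposal is correct and follows essentially the same approach as the paper: decompose $[\gamma]$ via Lemma~\ref{lemma:main}, evaluate $F'$ on the pieces, use $\phi'_k = \phi_k\fc(\iota_k)$ termwise, and identify the result with $F\fc(\iota)[\gamma]$. The paper's proof is terser (it writes the chain $F'[\gamma] = \phi'_{k_i}[\gamma_i]^+ = \phi_{k_i}\fc(\iota_{k_i})[\gamma_i]^+ = \phi_{k_i}[\gamma_i]^+ = F[\gamma] = F\fc(\iota)[\gamma]$ directly), but your more explicit justification that the included decomposition is an admissible one for computing $F$ is exactly the point the paper is using implicitly.
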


\begin{proof}
  Let $[\gamma] \in \fc(X,A)$. Apply Lemma~\ref{lemma:main} to $[\gamma]$.
  \begin{equation*}
    \begin{split}
      F'[\gamma] & = \phi'_{k_1}[\gamma_1]^+ + \ldots + \phi'_{k_n}[\gamma_n]^+\\
      & = \phi_{k_1} \fc(\iota_{k_1})[\gamma_1]^+ + \ldots + \phi_{k_n} \fc(\iota_{k_n})[\gamma_n]^+\\
      & = \phi_{k_1} [\gamma_1]^+ + \ldots + \phi_{k_n} [\gamma_n]^+\\
      & = F[\gamma]\\
      & = F \fc(\iota) [\gamma] \qedhere
    \end{split}
  \end{equation*}
\end{proof}

Let $\Cat$ denote the category of categories.
\begin{theorem} \label{thm:pushoutIota}
  The following diagram is a pushout in the arrow category on $\Cat$.
  \[
  \psset{arrows=->,nodesep=3pt,rowsep=0.5cm,colsep=1cm}
  \begin{psmatrix}
    & \fc(X_0,A_0) & & \fc(X_2,A_2)\\
    \fc(X_1,A_1) & & \fc(X,A)\\
    & \fc(X_0,B_0) & & \fc(X_2,B_2)\\
    \fc(X_1,B_1) & & \fc(X,B) \psset{shortput=nab} \everypsbox{\scriptstyle}
    \ncline{1,2}{1,4} \ncline{1,2}{2,1} \ncline{1,2}{3,2}
    \ncline{1,4}{3,4} \ncline{2,1}{4,1} \ncline{3,2}{3,4}
    \ncline{3,2}{4,1} \ncline{2,1}{2,3}
    \ncline{1,4}{2,3}
    \ncline{4,1}{4,3}
    \ncline{3,4}{4,3}
    \ncline{2,3}{4,3}
  \end{psmatrix}
  \]
\end{theorem}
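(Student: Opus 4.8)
The plan is to identify the ambient category as a functor category and appeal to the fact that colimits there are computed pointwise. The arrow category on $\Cat$ is $\Cat^{\mathbf{2}}$, where $\mathbf{2}$ is the category $0 \to 1$, and colimits in $\Cat^{\mathbf{2}}$ are formed objectwise. Under this identification the vertical inclusions $\fc(\iota_0)$, $\fc(\iota_1)$, $\fc(\iota_2)$, $\fc(\iota)$ are the objects of $\Cat^{\mathbf{2}}$ occurring in the diagram, and the displayed cube is a commutative square in $\Cat^{\mathbf{2}}$ whose span is $\fc(\iota_1) \leftarrow \fc(\iota_0) \to \fc(\iota_2)$ and whose cocone vertex is $\fc(\iota)$. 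Evaluation at $0 \in \mathbf{2}$ returns the top $A$--face and evaluation at $1$ returns the bottom $B$--face. So the claim reduces to showing that both faces are pushouts in $\Cat$ and that the comparison functor between the two face-pushouts is exactly $\fc(\iota)$.

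First I would handle the two faces. The top face is a pushout of categories by Theorem~\ref{thm:pushoutAinX}. The bottom face is a pushout by the same theorem instantiated on the triple $B_k \subseteq X_k$: one plays $B_k$ in the role of the small subspace and $X_k$ in the role of the large one, so that the standing ``$B_k = X_k$'' hypothesis of Theorem~\ref{thm:pushoutAinX} is met, using the reflections $\fc(X_k) \to \fc(X_k,B_k)$ in place of the $P^+_k$. The $B$-- and $X$--rows of \eqref{cd:ABX} together with the corresponding part of \eqref{cd:Pk} supply precisely the data this instance requires. Hence the pointwise pushout sends $0 \mapsto \fc(X,A)$ and $1 \mapsto \fc(X,B)$.

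It remains to check that the induced comparison $\fc(X,A) \to \fc(X,B)$ is the inclusion $\fc(\iota)$, which is where Lemma~\ref{lemma:F'} does the work. Unwinding the universal property directly, suppose we are given a test arrow $g : \C' \to \C$ in $\Cat$ together with a cocone on the span, that is, compatible functors $u_k : \fc(X_k,A_k) \to \C'$ and $v_k : \fc(X_k,B_k) \to \C$ with $g\, u_k = v_k\, \fc(\iota_k)$. Two applications of Theorem~\ref{thm:pushoutAinX} give unique functors $u : \fc(X,A) \to \C'$ and $v : \fc(X,B) \to \C$ restricting to the $u_k$ and $v_k$. The one nontrivial point is the commuting square $g\, u = v\, \fc(\iota)$; this is exactly Lemma~\ref{lemma:F'} applied with $\phi_k = v_k$ and $\phi'_k = v_k\, \fc(\iota_k) = g\, u_k$, for which the lemma yields $F' = F\, \fc(\iota)$, and here $F = v$ while $F' = g\, u$. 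Uniqueness of the pair $(u,v)$ then follows from the uniqueness clauses in the two applications of Theorem~\ref{thm:pushoutAinX}.

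I expect the main difficulty to be bookkeeping rather than mathematics: one must track the two levels and their many structure maps, and must verify that the hypotheses genuinely license the second application of Theorem~\ref{thm:pushoutAinX} to the $B$--layer, that is, that the reflections $\fc(X_k) \to \fc(X_k,B_k)$ are available. Once the pointwise-colimit principle is invoked, the real content has already been established in Theorem~\ref{thm:pushoutAinX} and Lemma~\ref{lemma:F'}, and what remains is only the identification of the comparison map with $\fc(\iota)$.
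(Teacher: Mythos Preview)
Your proof is correct and matches the paper's: both faces are pushouts by Theorem~\ref{thm:pushoutAinX}, and the remaining compatibility $g\,u = v\,\fc(\iota)$ (the paper writes it as $FG = H\,\fc(\iota)$) is exactly Lemma~\ref{lemma:F'}, with uniqueness coming from the two separate pushout properties. Your pointwise-colimit framing in $\Cat^{\mathbf{2}}$ is apt and in fact slightly streamlines the argument: since the cube is already a commutative square in $\Cat^{\mathbf{2}}$ (all side faces commute by \eqref{cd:ABX}), the pointwise criterion says it is a pushout as soon as both evaluations are, so once the two faces are handled the appeal to Lemma~\ref{lemma:F'} becomes redundant---there is no separate ``comparison map'' to identify, $\fc(\iota)$ is already part of the given cocone.
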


\begin{proof}
  Let $F: \C \to \D$ be a functor between categories $\C$ and $\D$.
  We wish to show that given the solid--arrowed commutative diagram
  below, there are unique maps $G$ and $H$ making the diagram commute.
  \[
  \psset{arrows=->,nodesep=3pt,rowsep=0.5cm,colsep=1cm}
  \begin{psmatrix}
    & \fc(X_0,A_0) & & \fc(X_2,A_2)\\
    \fc(X_1,A_1) & & \fc(X,A)\\
    & \fc(X_0,B_0) & & \fc(X_2,B_2)\\
    \fc(X_1,B_1) & & \fc(X,B)\\
    & & & & & \C\\
    & & & & & \D
\everypsbox{\scriptstyle}
    \ncline{1,2}{1,4} \ncline{1,2}{2,1} \ncline{1,2}{3,2}
    \ncline{1,4}{3,4} \ncline{2,1}{4,1} \ncline{3,2}{3,4}
    \ncline{3,2}{4,1} \ncline{2,3}{4,3} 
    \ncline{2,1}{2,3}
    \ncline{1,4}{2,3}
    \ncline{4,1}{4,3}
    \ncline{3,4}{4,3}
    \ncline{5,6}{6,6}\Aput{F}
    \ncline{1,4}{5,6}>{\phi'_2} 
    \ncline{3,4}{6,6}
    \ncline{2,1}{5,6}<{\phi'_1} 
    \ncline {4,1}{6,6}
    \psset{linestyle=dashed}
    \ncline{2,3}{5,6}\aput(0.75){G} \ncline{4,3}{6,6}\Aput{H}
  \end{psmatrix}
  \]
  Since the top and bottom squares are pushouts, there are unique maps $G$ and $H$ making the top and bottom commute. For commutativity it remains to show that 
the the following diagram commutes.
\[
\psset{arrows=->,nodesep=3pt,rowsep=1cm,colsep=1cm}
\begin{psmatrix}
\fc(X,A) & \C\\
\fc(X,B) & \D
\psset{shortput=nab}
\everypsbox{\scriptstyle}
\ncLine{1,1}{1,2}^{G}
\ncLine{1,1}{2,1}_{\fc(\iota)}
\ncLine{2,1}{2,2}^H
\ncLine{1,2}{2,2}^F
\end{psmatrix}
\]
Since $FG$ is the pushout map of the following diagram
\[
\psset{arrows=->,nodesep=3pt,rowsep=1cm,colsep=1cm}
\begin{psmatrix}
\fc(X_0,A_0) & \fc(X_2,A_2)\\
\fc(X_1,A_1) & \fc(X,A)\\
& & \D
\everypsbox{\scriptstyle}
\ncLine{1,1}{2,1}
\ncLine{1,1}{1,2}
\ncLine{2,1}{2,2}
\ncLine{1,2}{2,2}
\ncLine{1,2}{3,3}\Aput{F\phi_2}
\ncLine{2,1}{3,3}_{F\phi_1}
\ncLine[linestyle=dashed]{2,2}{3,3}<{FG}
\end{psmatrix}
\]
Lemma~\ref{lemma:F'} tells us that $FG = H \fc(\iota)$.

Finally, non-uniqueness of $(G,H)$ would contradict the uniqueness of $G$ and $H$.
\end{proof}

Given the commutative diagram~\eqref{cd:Pk} recall that $\fc(X,B)$ and $\fc(X,A)$ are the pushouts of the bottom and the top respectively. We will define a functor $P^+:\fc(X,B) \to \fc(X,A)$ and show that it is the pushout in the arrow category of $\Cat$.
  \begin{equation} \label{cd:P+}
  \psset{arrows=->,nodesep=3pt,rowsep=1cm,colsep=1cm}
  \begin{psmatrix}
    & \fc(X_0,A_0) & & \fc(X_2,A_2)\\
    \fc(X_1,A_1) & & \fc(X,A)\\
    & \fc(X_0,B_0) & & \fc(X_2,B_2)\\
    \fc(X_1,B_1) & & \fc(X,B) 
    \everypsbox{\scriptstyle}
    \ncline{1,2}{1,4}\Aput{\fc(\iota'_2)} \ncline{1,2}{2,1}\Bput{\fc(\iota'_1)}
    \ncline{3,2}{1,2}
    \ncline{4,1}{2,1}<{P^+_1} \ncline{3,2}{3,4}\aput(0.25){\fc(i_2)}
    \ncline{3,2}{4,1}\Bput{\fc(i_1)} 
    \ncline{3,4}{1,4}<{P'_2}
    \psset{linestyle=dashed}
    \ncline{2,1}{2,3}\aput(0.75){\fc(j'_1)} \ncline{1,4}{2,3}\bput(0.7){\fc(j'_2)}
    \ncline{3,4}{4,3}\bput(0.6){\fc(j_2)}
    \ncline{4,1}{4,3}\Aput{\fc(j_1)}
    \psset{linestyle=dotted}  \ncLine{4,3}{2,3}\bput(0.7){P^+}
  \end{psmatrix}
\end{equation}

\begin{definition} \label{def:P+}
  Define $P^+: \fc(X,B) \to \fc(X,A)$ as follows. For $x\in \fc(X,B)$, 
  \begin{equation*}
    P^+: x \mapsto
    \begin{cases}
      \fc(j'_1) P^+_1 x & \text{if } x \in B_1\\
      \fc(j'_2) P^+_2 x & \text{if } x \in B_2\\
    \end{cases}
  \end{equation*}
  Is this well-defined? If $x \in B_1 \cap B_2 = B_0$, then they agree
  by the commutativity of the solid and dashed arrows
  in~\eqref{cd:P+}. Let $[\gamma] \in \fc(X,B)$. By Lemma~\ref{lemma:main} there exist $\gamma_1, \ldots, \gamma_n$ with $[\gamma_i] \in \fc(X_{k_i},B_{k_i})$ for $k_i \in \{1,2\}$ such that $[\gamma] = [\gamma_1] + \ldots +
  [\gamma_n]$. Define
\begin{equation*}
P^+[\gamma] = \fc(j'_{k_1}) P^+_{k_1} [\gamma_1] + \ldots + \fc(j'_{k_n}) P^+_{k_n} [\gamma_n].
\end{equation*}
This well defined by the same argument as in the proof of
Theorem~\ref{thm:pushoutAinX}.  It will be convenient to denote
$P^+(x)$ and $P^+[\gamma]$ by $x^+$ and $[\gamma]^+$ respectively.
\end{definition}

\begin{lemma} \label{lemma:unit}
  Let $[\gamma] \in \fc(X,B)$ with $\gamma(0)=x$, $\gamma(1)=y$.
Then the following diagram commutes.
\[
\psset{arrows=->,nodesep=3pt,rowsep=1cm,colsep=1cm}
\begin{psmatrix}
x & x^+\\
y  & y^+
\psset{shortput=nab}
\everypsbox{\scriptstyle}
\ncLine{1,1}{1,2}
\ncLine{1,1}{2,1}_{[\gamma]}
\ncLine{2,1}{2,2}
\ncLine{1,2}{2,2}^{[\gamma]^+}
\end{psmatrix}
\]
\end{lemma}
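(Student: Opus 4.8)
The plan is to prove commutativity by the same strategy used throughout this section: decompose $[\gamma]$ into small pieces via Lemma~\ref{lemma:main}, invoke the \emph{local} reflections on each piece, and then paste the resulting local squares together inside $\fc(X,B)$. The unlabeled horizontal arrows are the units $x \to x^+$ and $y \to y^+$ of the glued reflection $P^+$ (the analogues of the arrows $[\gamma_x]$, $[\gamma_y]$ in the square following Definition~\ref{def:futureRetract}). Concretely, for $x \in B_k$ the unit $\eta_x : x \to x^+$ is $\fc(j_k)$ applied to the local unit $\eta^k_x : x \to x^{+_k}$ of $P^+_k : \fc(X_k,B_k) \to \fc(X_k,A_k)$; exactly as for $P^+$ itself in Definition~\ref{def:P+}, this is well defined when $x \in B_0$ because diagram~\eqref{cd:Pk} commutes.

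First I would apply Lemma~\ref{lemma:main} to write $[\gamma] = [\gamma_1] + \ldots + [\gamma_n]$ with $[\gamma_i] \in \fc(X_{k_i},B_{k_i})$, $k_i \in \{1,2\}$, and put $x_i = \gamma(i/n)$, so that $x_0 = x$ and $x_n = y$. For each $i$ the morphism $\eta^{k_i}$ is the unit of the adjunction $P^+_{k_i} \dashv \fc(\iota)$, and its naturality applied to $[\gamma_i] : x_{i-1} \to x_i$ yields a commutative square in $\fc(X_{k_i},B_{k_i})$ with top $\eta^{k_i}_{x_{i-1}}$, bottom $\eta^{k_i}_{x_i}$, left edge $[\gamma_i]$, and right edge $P^+_{k_i}[\gamma_i]$. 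This is precisely the small square already produced in the proof of Lemma~\ref{lemma:main}.

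Next I would push each local square forward along $\fc(j_{k_i})$. Functors preserve commutative diagrams, so I obtain $n$ commutative squares in $\fc(X,B)$, the $i$-th with left edge $[\gamma_i]$, right edge $[\gamma_i]^+$, and horizontals $\eta_{x_{i-1}}$, $\eta_{x_i}$ (using the compatibility $\fc(j_{k_i})\fc(\iota_{k_i}) = \fc(\iota)\fc(j'_{k_i})$ from \eqref{cd:ABX} to identify the right edge with $[\gamma_i]^+$). Pasting these vertically along the shared edges $x_i \to x_i^+$, the left edges compose to $[\gamma_1] + \ldots + [\gamma_n] = [\gamma]$, the right edges compose to $[\gamma_1]^+ + \ldots + [\gamma_n]^+ = [\gamma]^+$ by Definition~\ref{def:P+}, and the intermediate horizontals telescope away, leaving exactly the claimed square with top $\eta_x$, bottom $\eta_y$, left $[\gamma]$, and right $[\gamma]^+$.

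The only delicate point is that the pasting requires a single unambiguous unit $\eta_{x_i}$ at each shared vertex, even though an intermediate $x_i$ may lie in $B_0$ and hence be covered by both $X_1$ and $X_2$; the commutativity of~\eqref{cd:Pk} guarantees that $\eta_{x_i}$ and $x_i^+$ are independent of the chosen index, which is the same well-definedness already established for $P^+$. Independence of the construction from the chosen Lebesgue decomposition follows verbatim from the refinement-and-overlap arguments in the proof of Theorem~\ref{thm:pushoutAinX}, so it is not a genuine obstacle. Thus the substantive content is local naturality plus functoriality, and the main—but only mild—bit of care is the bookkeeping of units at shared vertices.
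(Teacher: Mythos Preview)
Your proposal is correct and follows essentially the same approach as the paper: decompose $[\gamma]$ via Lemma~\ref{lemma:main} (as in Definition~\ref{def:P+}), obtain the small naturality squares for each $[\gamma_i]$, and paste them vertically to get the big square. The paper's proof is simply a terser version of what you wrote, omitting the explicit discussion of well-definedness of the units at shared vertices and of independence from the chosen decomposition.
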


\begin{proof}
Let $\gamma_1, \ldots, \gamma_n$ be as in Definition~\ref{def:P+}. Let $[\gamma_i]^+$ denote $\fc(j'_{k_i})P^+_{k_i}[\gamma_i]$.
Composing the commutative diagrams
\[
\psset{arrows=->,nodesep=3pt,rowsep=1cm,colsep=1cm}
\begin{psmatrix}
x_{i-1} & x_{i-1}^+\\
x_i  & x_i^+
\psset{shortput=nab}
\everypsbox{\scriptstyle}
\ncLine{1,1}{1,2}
\ncLine{1,1}{2,1}_{[\gamma_i]}
\ncLine{2,1}{2,2}
\ncLine{1,2}{2,2}^{[\gamma_i]^+}
\end{psmatrix}
\]
  we obtain the desired result.
\end{proof}

\begin{theorem} \label{thm:vanKampenABX}
  In \eqref{cd:P+}, $P^+$ is a pushout in the arrow category on $\Cat$.
\end{theorem}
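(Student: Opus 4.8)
The plan is to use the standard fact that pushouts in the arrow category on $\Cat$ are computed componentwise: the pushout of a span of functors is the functor induced between the pushout of the three domains and the pushout of the three codomains. So I would first identify the two relevant component pushouts and then check that $P^+$ is precisely the induced comparison functor between them.

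For the component pushouts I would apply Theorem~\ref{thm:pushoutAinX} at two levels. With $B$ playing the role of $A$ (so the reflections are $\fc(X_k)\to\fc(X_k,B_k)$), it shows that $\fc(X,B)$ is the pushout of $\fc(X_1,B_1)\leftarrow\fc(X_0,B_0)\to\fc(X_2,B_2)$; applied as stated (with the composite reflections $\fc(X_k)\to\fc(X_k,A_k)$), it shows that $\fc(X,A)$ is the pushout of $\fc(X_1,A_1)\leftarrow\fc(X_0,A_0)\to\fc(X_2,A_2)$. These are exactly the pushouts of the domains and of the codomains of the span of reflections $P^+_k\colon\fc(X_k,B_k)\to\fc(X_k,A_k)$. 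That $P^+$ is the induced comparison functor amounts to the commuting squares $P^+\,\fc(j_k)=\fc(j'_k)\,P^+_k$ for $k=1,2$, which are immediate from Definition~\ref{def:P+} (on a path supported in $X_k$ the defining formula for $P^+$ collapses to $\fc(j'_k)P^+_k$) and are recorded more generally by Lemma~\ref{lemma:unit}.

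To keep the argument self-contained in the style of Theorem~\ref{thm:pushoutIota}, I would then verify the universal property directly. Given a functor $F\colon\C\to\D$ and a compatible cocone of functors $\psi_k\colon\fc(X_k,B_k)\to\C$ and $\chi_k\colon\fc(X_k,A_k)\to\D$ with $F\psi_k=\chi_k P^+_k$ (compatible meaning $\psi_1\fc(i_1)=\psi_2\fc(i_2)$ and $\chi_1\fc(i'_1)=\chi_2\fc(i'_2)$), the two component universal properties produce a unique $G\colon\fc(X,B)\to\C$ with $G\,\fc(j_k)=\psi_k$ and a unique $H\colon\fc(X,A)\to\D$ with $H\,\fc(j'_k)=\chi_k$; uniqueness of the pair $(G,H)$ is then inherited from the separate uniqueness of $G$ and of $H$. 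The one point that requires an actual computation, and hence the main obstacle, is the remaining commutativity $FG=HP^+$, which is the exact analog of Lemma~\ref{lemma:F'}: decomposing an arbitrary $[\gamma]\in\fc(X,B)$ as $[\gamma_1]+\cdots+[\gamma_n]$ by Lemma~\ref{lemma:main} and inserting the defining formula for $P^+$, both $FG[\gamma]$ and $HP^+[\gamma]$ reduce to $\chi_{k_1}P^+_{k_1}[\gamma_1]+\cdots+\chi_{k_n}P^+_{k_n}[\gamma_n]$, using $G\,\fc(j_k)=\psi_k$, $H\,\fc(j'_k)=\chi_k$, and $F\psi_k=\chi_k P^+_k$. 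Once $P^+$ is known to be well defined, this check is routine, so the substantive content of the theorem is really concentrated in the earlier results it invokes.
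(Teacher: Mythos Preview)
Your proposal is correct and matches the paper's approach: the paper's proof consists of the single sentence ``The theorem follows by the same argument as the one used to prove Theorem~\ref{thm:pushoutIota},'' and what you have written is precisely that argument spelled out in detail, with the analog of Lemma~\ref{lemma:F'} carried out for $P^+$ in place of $\fc(\iota)$. The only minor inaccuracy is your appeal to Lemma~\ref{lemma:unit} for the squares $P^+\fc(j_k)=\fc(j'_k)P^+_k$; those squares follow directly from Definition~\ref{def:P+} as you already note, whereas Lemma~\ref{lemma:unit} records the naturality of the unit rather than these compatibility squares.
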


\begin{proof}
  The theorem follows by the same argument as the one used to prove Theorem~\ref{thm:pushoutIota}.
\end{proof}

\begin{theorem}
  Assume that $P^+_k$ is the left adjoint of $\fc(\iota_k)$ for $k=0,1,2$. Then there is an adjunction,
  \begin{equation*}
    P^+: \fc(X,B) \adjn \fc(X,A): \fc(\iota).
  \end{equation*}
\end{theorem}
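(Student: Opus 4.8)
The plan is to exhibit the adjunction as a unit--counit pair $(\eta,\epsilon)$ rather than checking a hom-set bijection directly, and to take the counit to be the identity. This is legitimate because each $P^+_k$ is a reflection onto the full subcategory $\fc(X_k,A_k)$, so $P^+_k\fc(\iota_k)=\Id_{\fc(X_k,A_k)}$; decomposing a morphism of $\fc(X,A)$ by Lemma~\ref{lemma:main} into pieces lying in the various $\fc(X_{k_i},A_{k_i})$, each of which $P^+_{k_i}$ fixes, one checks from Definition~\ref{def:P+} that $P^+\fc(\iota)=\Id_{\fc(X,A)}$ on the nose. Thus I may set $\epsilon=\Id$, and everything reduces to building a natural unit $\eta\colon \Id_{\fc(X,B)}\to \fc(\iota)P^+$ for which the two triangle identities (with $\epsilon=\Id$) collapse to the statements $P^+(\eta_x)=\Id_{P^+x}$ for all $x$ and $\eta_a=\Id_a$ for all $a\in A$.

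First I would define the unit on objects by gluing the local units. Each local adjunction $P^+_k\dashv \fc(\iota_k)$ supplies a unit $\eta^k_x\colon x\to \fc(\iota_k)(P^+_k x)$ in $\fc(X_k,B_k)$ for $x\in B_k$. For $x\in B$ I choose $k\in\{1,2\}$ with $x\in B_k$ and set $\eta_x=\fc(j_k)(\eta^k_x)$; using the relation $\iota j'_k=j_k\iota_k$ read off from \eqref{cd:ABX}, this is indeed a morphism $x\to \fc(\iota)(P^+x)$ in $\fc(X,B)$. For $x\in B_0$ the two admissible choices of $k$ agree by the commutativity of \eqref{cd:Pk}, exactly as in the well-definedness check following Definition~\ref{def:P+}, so $\eta_x$ is well defined.

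Next I would verify naturality and the two triangle identities. Naturality is precisely Lemma~\ref{lemma:unit}: for $[\gamma]\colon x\to y$ in $\fc(X,B)$ the square with horizontal edges $\eta_x,\eta_y$ and vertical edges $[\gamma]$ and $[\gamma]^+=P^+[\gamma]$ commutes. For the second triangle identity, if $a\in A$ then $a\in A_k$ for some $k$, the corresponding local counit is the identity, so $\eta^k_a=\Id_a$ and hence $\eta_a=\Id_a$. For the first, I would compute $P^+(\eta_x)$ straight from Definition~\ref{def:P+}: since $\eta_x=\fc(j_k)(\eta^k_x)$ is supported in $X_k$, one gets $P^+(\eta_x)=\fc(j'_k)\,P^+_k(\eta^k_x)$, and the local triangle identity gives $P^+_k(\eta^k_x)=\Id_{P^+_k x}$, whence $P^+(\eta_x)=\Id_{P^+x}$. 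Together with $\epsilon=\Id$, the pair $(\eta,\Id)$ then exhibits $P^+\dashv\fc(\iota)$.

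I expect the main obstacle to be coherence of the glued data rather than any isolated computation: ensuring that the locally defined units genuinely agree over the overlap $B_0$, and that the reduction of the global triangle identities to the local ones is valid for arbitrary morphisms of $\fc(X,B)$, not merely for those supported in a single $X_k$. Both points are controlled by the decomposition of Lemma~\ref{lemma:main} together with the well-definedness argument of Definition~\ref{def:P+}, since every morphism of $\fc(X,B)$ is a composite of morphisms supported in $X_1$ or $X_2$, and both $P^+$ and $\eta$ are constructed compatibly with such composites.
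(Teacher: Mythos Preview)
Your proposal is correct and follows essentially the same approach as the paper: exhibit the adjunction via a unit--counit pair with $\epsilon=\Id$ (using $P^+\fc(\iota)=\Id$), define the unit by gluing the local units $\eta^k$, invoke Lemma~\ref{lemma:unit} for naturality, and verify the triangle identities. The paper's proof is considerably terser---it simply asserts that $\eta^+$ is natural by Lemma~\ref{lemma:unit}, that $\epsilon^+$ is the identity, and then writes down the triangle identities---whereas you have spelled out the well-definedness of $\eta$ over $B_0$ and the reduction of the global triangle identities to the local ones, but the underlying argument is the same.
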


\begin{proof}
  The unit $\eta^+: 1_{\fc(X,B)} \to \fc(\iota)P^+$ is a natural
  transformation by Lemma~\ref{lemma:unit}. The counit $\epsilon^+:
  P^+\fc(\iota) \to 1_{\fc(X,A)}$ is given by the identity.
Finally, $\epsilon^+_{x^+} \circ P^+(\eta^+_{x^+}) = \Id_{x^+}$ and $\fc(\iota)(\epsilon^+_a) \circ \eta^+_a = \Id_a$.
\end{proof}

Assume that for $k=0,1,2$ we have chains of future retracts and past retracts
\begin{multline*}
  P_k: \fc(X_{k,0}) \xto{P^+_{k,1}} \fc(X_{k,0},X_{k,1}) \xto{P^-_{k,2}} \\ \fc(X_{k,0},X_{k,2}) \xto{P^+{k,3}} \ldots \to \fc(X_{k,0},X_{k,n})
\end{multline*}
that are \emph{compatible}. That is, for $\ell=1,\ldots,n$, $X_{1,\ell} \cap X_{2,\ell} = X_{0,\ell}$, $X_{\ell} = \Int(X_{1,\ell}) \cup \Int(X_{2,\ell})$, and the diagrams corresponding to \eqref{cd:Pk}, but with $P_{0,\ell}$, $P_{1,\ell}$ and $P_{2,\ell}$, commute. 
Apply Theorem~\ref{thm:pushoutAinX} to obtain pushouts $\fc(X_0,X_{\ell})$ for $\ell = 1, \ldots, n$.
Then use Definition~\ref{def:P+} for each $\ell=1,\ldots,n$, to obtain 
\begin{equation*}
  P: \fc(X_0) \xto{P^+_1} \fc(X_0,X_1) \xto{P^-_2} \fc(X_0,X_2) \xto{P^+_3} \ldots \to \fc(X_0,X_n).
\end{equation*}
Apply Theorem~\ref{thm:vanKampenABX} inductively to obtain the following.

\begin{theorem} \label{thm:vanKampenChain}
  The pushout of compatible chains of future retracts and past retracts is a chain of future retracts and past retracts.
\end{theorem}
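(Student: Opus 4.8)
The plan is to assemble the pushout chain one position at a time, turning at each stage the two ingredients already in hand---the level--wise pushouts of categories produced by Theorem~\ref{thm:pushoutAinX} and the reflection (coreflection) functors of Definition~\ref{def:P+}---into a genuine future (past) retract by invoking Theorem~\ref{thm:vanKampenABX} together with the adjunction theorem following Theorem~\ref{thm:vanKampenABX}. First I would fix the length $n$ and induct on the chain position $\ell$. For each $\ell$ the relevant triple is the fixed ambient space $X_0$ together with $B = X_{\ell-1}$ and $A = X_\ell$, and at the piece level $B_k = X_{k,\ell-1}$, $A_k = X_{k,\ell}$, $X_k = X_{k,0}$ for $k = 0,1,2$. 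The compatibility hypotheses---namely $X_{1,\ell} \cap X_{2,\ell} = X_{0,\ell}$, the covering condition $X_\ell = \Int(X_{1,\ell}) \cup \Int(X_{2,\ell})$, and the commutativity of the level--$\ell$ instance of \eqref{cd:Pk}---are exactly what is needed to place us in the situation of \eqref{cd:ABX} and \eqref{cd:Pk} at this stage, with $X_\ell \subseteq X_{\ell-1}$ so that the smaller space $A = X_\ell$ is the target of the retraction.

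With the stage set, I would run the same three--step engine at every level. By Theorem~\ref{thm:pushoutAinX} both $\fc(X_0, X_{\ell-1})$ and $\fc(X_0, X_\ell)$ are the pushouts of the corresponding squares of piecewise fundamental categories; these are the bottom and top faces of the cube in \eqref{cd:P+}. By Definition~\ref{def:P+} they are joined by a well--defined functor $P^{\pm}_\ell \colon \fc(X_0, X_{\ell-1}) \to \fc(X_0, X_\ell)$. By Theorem~\ref{thm:vanKampenABX}, this $P^{\pm}_\ell$ is the pushout, in the arrow category on \Cat, of the piecewise reflections $P^{\pm}_{k,\ell}$. Finally, since each $P^{\pm}_{k,\ell}$ is by hypothesis a future (past) retract---equivalently a left (right) adjoint of the inclusion---the adjunction theorem upgrades $P^{\pm}_\ell$ to an adjunction $P^+_\ell \colon \fc(X_0, X_{\ell-1}) \adjn \fc(X_0, X_\ell) \colon \fc(\iota_\ell)$ satisfying $P^+_\ell \fc(\iota_\ell) = \Id$, that is, to a future retract; the past--retract steps are settled by the dual statement.

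The induction then simply strings these stages together: having certified $\fc(X_0, X_{\ell-1})$ as a pushout at the previous stage, I feed it in as the $B$--level input of the level--$\ell$ application of Theorem~\ref{thm:vanKampenABX} and read off both $\fc(X_0, X_\ell)$ and the retract $P^{\pm}_\ell$. Concatenating over $\ell = 1, \ldots, n$ produces precisely the chain
\[
P \colon \fc(X_0) \xto{P^+_1} \fc(X_0, X_1) \xto{P^-_2} \fc(X_0, X_2) \xto{P^+_3} \cdots \to \fc(X_0, X_n),
\]
each of whose arrows is a future or past retract, which is the assertion.

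I expect the only real difficulty to be bookkeeping rather than conceptual. One must verify that the compatibility data for the three chains really does yield, at every position $\ell$, a commuting instance of \eqref{cd:Pk} so that Definition~\ref{def:P+} applies and $P^{\pm}_\ell$ is well defined, and one must keep the future/past alternation straight, invoking Theorem~\ref{thm:vanKampenABX} and the adjunction theorem for the reflective steps and their duals for the coreflective steps. The genuine mathematical content---that a pushout of reflection functors is again a reflection functor---has already been isolated in Theorem~\ref{thm:vanKampenABX} and the adjunction theorem, so no further estimates or constructions are needed here.
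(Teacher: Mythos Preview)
Your proposal is correct and follows essentially the same approach as the paper: both argue by induction on the chain position $\ell$, invoking Theorem~\ref{thm:pushoutAinX} for the level-wise pushouts, Definition~\ref{def:P+} for the connecting functors, and Theorem~\ref{thm:vanKampenABX} at each stage. You are in fact slightly more careful than the paper, which only cites Theorem~\ref{thm:vanKampenABX} for the inductive step; you correctly note that the subsequent adjunction theorem is what actually certifies each $P^{\pm}_\ell$ as a future or past retract.
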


It remains to apply this result to extremal models.

\begin{theorem} \label{thm:vanKampenExtremalModels}
  The pushout of compatible compact extremal models in an extremal model.
\end{theorem}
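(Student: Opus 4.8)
The plan is to read this off from Theorem~\ref{thm:vanKampenChain} together with a short verification of the extremal condition in Definition~\ref{def:extremalModel}. Write the ambient van Kampen decomposition as $X = \Int(X_1) \cup \Int(X_2)$ with $X_0 = X_1 \cap X_2$, and suppose that for $k=0,1,2$ the piece $X_k$ carries a compatible compact extremal model with final stage $\fc(X_k,A_k)$, so in particular $\Ext(\fc(X_k)) \subseteq A_k$. First I would invoke Theorem~\ref{thm:vanKampenChain} to assemble the three compatible chains into a single chain of future and past retracts $\fc(X) \to \cdots \to \fc(X,A)$, whose terminal subspace is the pushout $A$, with $A = \Int(A_1) \cup \Int(A_2)$ and $A_0 = A_1 \cap A_2$. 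This step already exhibits the pushout as a chain of retracts; what remains is to check the defining inclusion $\Ext(\fc(X)) \subseteq A$.

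The heart of the verification is a restriction-of-extremality observation: if $p \in \Ext(\fc(X))$ and $p \in \Int(X_k)$, then $p \in \Ext(\fc(X_k))$. I would argue this directly from the induced preorders. The inclusion $X_k \incl X$ carries dipaths to dipaths, so $p \leq q$ in $\fc(X_k)$ implies $p \leq q$ in $\fc(X)$. Hence if $p$ is maximal in $\fc(X)$ and $p \leq q$ in $\fc(X_k)$, then $p \leq q$ in $\fc(X)$ forces $p = q$, so $p$ is maximal in $\fc(X_k)$; the minimal case is dual. Since the interiors of $X_1$ and $X_2$ cover $X$, every $p \in \Ext(\fc(X))$ lies in some $\Int(X_k)$, whence $p \in \Ext(\fc(X_k)) \subseteq A_k \subseteq A$. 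This yields $\Ext(\fc(X)) \subseteq A$, so the chain produced above is an extremal model.

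To use the compactness hypothesis I would then argue that the construction stays within the class of compact extremal models of compact pospaces. The terminal subspace $A = A_1 \cup A_2$, and each intermediate subspace of the chain, is a union of two compacts, hence compact; and the pushout $X$, glued from the compact pospaces $X_1, X_2$ along $X_0$, again carries a closed partial order, so it is a compact pospace. Consequently Theorem~\ref{prop:compactPospace} applies to the extremal model just constructed and shows that it induces an isomorphism of fundamental bipartite graphs; that is, the pushout is not merely injective on fundamental bipartite graphs in the sense of Proposition~\ref{prop:injection}, but non-collapsing, as promised by the surrounding discussion.

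I expect the genuine obstacles to be organizational rather than conceptual. The restriction lemma is the easy direction of the comparison between $\fc(X_k)$ and $\fc(X)$; the subtle converse, that an extremal point of a piece stays extremal after gluing, is exactly what one would need for a minimality statement and is \emph{not} required here. The real care lies in propagating the compatibility of the reflections~\eqref{cd:Pk} through all $n$ stages of the chain so that Theorem~\ref{thm:vanKampenChain} may be applied inductively, and in confirming at each stage that the pushout of compact pospaces is again a compact pospace, so that the compactness feeding Theorem~\ref{prop:compactPospace} survives the induction. These are the points I would check most carefully.
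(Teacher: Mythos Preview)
Your first two paragraphs are exactly the paper's proof: invoke Theorem~\ref{thm:vanKampenChain} to get the chain of retracts, then check $\Ext(\fc(X))\subseteq A$ by observing that an extremal point of $\fc(X)$ lying in $X_k$ is already extremal in $\fc(X_k)$ and hence in $A_k\subseteq A$. That is all the paper does.

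Your third paragraph, however, is superfluous and somewhat off-target. The theorem only asserts that the pushout is an \emph{extremal model}; it does not claim the pushout is again a compact pospace, nor that Theorem~\ref{prop:compactPospace} applies to it. The paper's proof never invokes the compactness hypothesis at all, and you do not need it to establish $\Ext(\fc(X))\subseteq A$. Your side claims that the glued space $X$ ``again carries a closed partial order'' and that the intermediate stages are compact would require separate justification and are not part of what is being proved here; drop that paragraph and the argument is complete.
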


\begin{proof}
  It remains to show that if $P_1: \fc(X_1) \to \fc(X_1,A_1)$ and
  $P_2:\fc(X_2) \to \fc(X_2,A_2)$ are compatible
  extremal models with $\Ext(X_1) \subseteq A_1$ and
  $\Ext(X_2) \subseteq A_2$ then the pushout $P: \fc(X) \to \fc(X,A)$
  (Theorem~\ref{thm:vanKampenChain}) satisfies $\Ext(X) \subseteq A$.

  Let $x \in \Ext(X)$, where $X = \Int(X_2) \cup \Int(X_2)$.  Without
  loss of generality, assume that $x \in X_1$. Then $x \in \Ext(X_1)$
  -- otherwise this would contradict $x \in \Ext(X)$.
  Therefore $P_1(x) \in A_1 \subseteq A$.
\end{proof}

\begin{example} \label{example:vanKampen}
  Let $X_1$ be the subspace of $\dI \times \dI$ obtained by removing the two squares $(0.1,0.3) \times (0.4,0.6)$ and $(0.7,0.9) \times (0.4,0.6)$. Let $X_2$ be the \mbox{d--space} obtained by removing $(0.4,0.6) \times (0.4,0.6)$ from $\dI \times \dI$ and identifying $(0.2,y)$ and $(0.8,y)$ for $y \in [0,1]$. Let $X$ be obtained by identifying $[0.3,0.5] \times \dI$ in $X_1$ with $[0,0.2] \times \dI$ in $X_2$ and $[0.5,0.7] \times \dI$ in $X_1$ with $[0.8,1] \times \dI$ in $X_2$.

Let $P_{1,1}^+$, $P_{1,2}^-$, $P_{2,1}^+$, and $P_{2,2}^-$ be the future retracts and past retracts of $X_1$ and $X_2$ indicated below.
\begin{center}
\psset{unit=2.5cm}
\begin{pspicture}(1,1)
\psset{fillstyle=solid,fillcolor=gray1,linecolor=black}
\psframe(0,0)(1,1)
\psset{fillcolor=white,linecolor=black}
\psframe(0.1,0.4)(0.3,0.6)
\psframe(0.7,0.4)(0.9,0.6)
\psset{linecolor=black,fillstyle=none}
\psline{->}(0.2,0)(0.2001,0)
\psline{->}(0,0.2)(0,0.2001)
\psline[linecolor=green,linestyle=dashed](0.5,0)(0.5,1)
\psdots(0,0)(0.3,0)(0.5,0)(0.5,1)(0.7,1)(1,1)
\psset{linecolor=yellow,linestyle=dotted}
\psline(0,0.4)(0.1,0.4)
\psline(0.3,0)(0.3,0.4)(0.7,0.4)
\psline(0.9,0.4)(1,0.4)
\psline(0.7,0.6)(0.7,1)
\psset{linecolor=blue,linestyle=solid,doubleline=true,doublecolor=gray1,arrowscale =1.5}
\psline{->}(0.35,0.7)(0.45,0.8)
\psline{->}(0.55,0.7)(0.65,0.8)
\psline{->}(0.85,0.7)(0.95,0.8)
\psline{->}(0.15,0.3)(0.05,0.2)
\psline{->}(0.45,0.3)(0.35,0.2)
\psline{->}(0.65,0.3)(0.55,0.2)
\end{pspicture}%
\quad \quad \quad \quad
\begin{pspicture}(1,1)
\psset{fillstyle=solid,fillcolor=gray1,linecolor=black}
\psframe(0,0)(1,1)
\psset{fillcolor=white,linecolor=black}
\psframe(0.4,0.4)(0.6,0.6)
\psset{linecolor=black,fillstyle=none}
\psline{->}(0.4,0)(0.4001,0)
\psline{->}(0,0.2)(0,0.2001)
\psdots(0,0)(0.2,0)(0.8,0)(0.2,1)(0.8,1)(1,1)
\psset{linecolor=green,linestyle=dashed}
\psline(0.2,0)(0.2,1)
\psline(0.8,0)(0.8,1)
\psset{linecolor=yellow,linestyle=dotted}
\psline(0,0.4)(0.4,0.4)
\psline(0.6,0.4)(1,0.4)
\psset{linecolor=blue,linestyle=solid,doubleline=true,doublecolor=gray1,arrowscale =1.5}
\psline{->}(0.05,0.7)(0.15,0.8)
\psline{->}(0.65,0.7)(0.75,0.8)
\psline{->}(0.85,0.7)(0.95,0.8)
\psline{->}(0.15,0.3)(0.05,0.2)
\psline{->}(0.35,0.3)(0.25,0.2)
\psline{->}(0.95,0.3)(0.85,0.2)
\end{pspicture}%
\quad\quad\quad\quad
\begin{pspicture}(0,-0.2)(1,0.8)
\psset{fillstyle=solid,fillcolor=gray1,linecolor=black}
\pspolygon(0,0)(0.8,0)(1,0.4)(0.2,0.4)
\pspolygon[linecolor=gray1](0.31055,0.14472)(0.51055,0.54472)(0.68944,0.45528)(0.48944,0.05528)
\psarc(0.6,0.5){0.1}{-26.565}{153.435}
\pscircle[fillcolor=gray2](0.4,0.1){0.1}
\psline(0.31055,0.14472)(0.51055,0.54472)
\psline(0.68944,0.45528)(0.48944,0.05528)
\psset{fillcolor=white,linecolor=black}
\pspolygon(0.15,0.15)(0.25,0.15)(0.3,0.25)(0.2,0.25)
\pspolygon(0.65,0.15)(0.75,0.15)(0.8,0.25)(0.7,0.25)
\pspolygon[linecolor=white](0.53236,0.33192)(0.58236,0.43192)(0.49080,0.44397)(0.44080,0.34397)
\psline[linecolor=gray1,linewidth=1.2pt](0.44080,0.34397)(0.53236,0.33192)
\psdots[linecolor=gray1,dotsize=1.2pt](0.44080,0.34397)(0.53236,0.33192)(0.59,0.43192)
\psarc[fillcolor=gray1](0.475,0.25){0.1}{53}{112}
\psarc(0.525,0.35){0.1}{53}{112}
\psline(0.53236,0.33192)(0.58236,0.43192)
\psline(0.44080,0.34397)(0.49080,0.44397)
\psset{linecolor=black,fillstyle=none}
\psline{->}(0.2,0)(0.2001,0)
\psline{->}(0.1,0.2)(0.1001,0.2002)
\psset{linecolor=green,linestyle=dashed}
\psline(0.4,0)(0.6,0.4)
\end{pspicture}%
\end{center}
Then $(P_{1,1}^+, P_{1,2}^-)$ and $(P_{2,1}^+, P_{2,2}^-)$ are compatible extremal models. 
Combining $\fc(X_1,A_1)$ and $\fc(X_2,A_2)$, we obtain $\fc(X,A)$. These fundamental categories are generated by the graphs below.
\begin{center}
\psset{unit=2.5cm}
\begin{pspicture}(0,-0.2)(1,1.2)
\psdots(0,0)(0.3,0)(0.5,0)(0.5,1)(0.7,1)(1,1)
\psset{linestyle=dashed,linecolor=red}
\psline(0,0)(0.3,0)(0.5,0)(0.5,1)(0.7,1)(1,1)
\psline(0,0)(0.5,1)
\psline(0.5,0)(1,1)
\psline{->}(0.2,0)(0.201,0)
\psline{->}(0.45,0)(0.451,0)
\psline{->}(0.5,0.6)(0.5,0.602)
\psline{->}(0.65,1)(0.651,1)
\psline{->}(0.9,1)(0.901,1)
\psline{->}(0.3,0.6)(0.301,0.602)
\psline{->}(0.8,0.6)(0.801,0.602)
\end{pspicture}%
\quad \quad \quad \quad
\begin{pspicture}(0,-0.2)(1,1.2)
\psdots(0.3,0)(0.5,0)(0.5,1)(0.7,1)
\psset{linestyle=dashed,linecolor=red}
\psline(0.3,0)(0.5,0)(0.5,1)(0.7,1)
\psline{->}(0.45,0)(0.451,0)
\psline{->}(0.5,0.6)(0.5,0.602)
\psline{->}(0.65,1)(0.651,1)
\pscurve(0.5,0)(0.7,-0.2)(0.8,-0.1)(0.5,0)
\pscurve(0.5,1)(0.3,1.2)(0.2,1.1)(0.5,1)
\psline{->}(0.71,-0.025)(0.711,-0.02515)
\psline{->}(0.39,1.15)(0.389,1.1511)
\end{pspicture}%
\quad \quad \quad \quad
\begin{pspicture}(0,-0.2)(1,1.2)
\psdots(0,0)(0.3,0)(0.5,0)(0.5,1)(0.7,1)(1,1)
\psset{linestyle=dashed,linecolor=red}
\psline(0,0)(0.3,0)(0.5,0)(0.5,1)(0.7,1)(1,1)
\psline(0,0)(0.5,1)
\psline(0.5,0)(1,1)
\psline{->}(0.2,0)(0.201,0)
\psline{->}(0.45,0)(0.451,0)
\psline{->}(0.5,0.6)(0.5,0.602)
\psline{->}(0.65,1)(0.651,1)
\psline{->}(0.9,1)(0.901,1)
\psline{->}(0.3,0.6)(0.301,0.602)
\psline{->}(0.8,0.6)(0.801,0.602)
\pscurve(0.5,0)(0.7,-0.2)(0.8,-0.1)(0.5,0)
\pscurve(0.5,1)(0.3,1.2)(0.2,1.1)(0.5,1)
\psline{->}(0.71,-0.025)(0.711,-0.02515)
\psline{->}(0.39,1.15)(0.389,1.1511)
\end{pspicture}%
\end{center}
\end{example}


\end{document}